\numberwithin{equation}{section}
\newtheorem{theorem}{Theorem}[section]
\newtheorem{lemma}[theorem]{Lemma}
\newtheorem{corollary}[theorem]{Corollary}
\newcommand{\R}{\mathbb{R}}
\newcommand{\C}{\mathbb{C}}
\newcommand{\Z}{\mathbb{Z}} 
\newcommand{\N}{\mathbb{N}}
\newcommand{\supp}{\mathrm{supp}}
\newcommand{\ND}{\mathcal{R}}
\newcommand{\NDmat}{\mathbf{R}}
\newcommand{\pNDdiffMat}{{\mathbf{R}}^\Gamma_{\sigma,1}}
\newcommand{\NDdiffMat}{\mathbf{R}_{\sigma,1}}
\newcommand{\pNDdiff}{{\mathcal{R}}^\Gamma_{\sigma,1}}
\newcommand{\NDdiff}{\mathcal{R}_{\sigma,1}}
\newcommand{\Tin}{T_\diamond(\bndry)}
\newcommand{\Tout}{T(\bndry)}
\DeclareMathOperator{\parphi}{\psi}
\newcommand{\Hmhalf}{L^2_\diamond(\bndry)}
\newcommand{\Hhalf}{L^2_\diamond(\bndry)}
\newcommand{\parspace}{L^2_\Gamma(\bndry)}
\DeclareMathOperator{\parI}{\mathcal{I}}
\DeclareMathOperator{\dbar}{\overline{\partial}}
\DeclareMathOperator{\dnu}{\partial_\nu}
\DeclareMathOperator{\bndry}{ {\partial\Omega} }
\DeclareMathOperator{\T}{\mathbf{t}}
\begin{document}

\title[Approximation of full-boundary data]{Approximation of full-boundary data from partial-boundary electrode measurements}
\author[A. Hauptmann]{A. Hauptmann }\thanks{Department of Computer Science, University College London, United Kingdom.}
\date{\today}

\begin{abstract}
\noindent  
Measurements on a subset of the boundary are common in electrical impedance tomography, especially any electrode model can be interpreted as a partial-boundary problem. The information obtained is different to full-boundary measurements as modelled by the ideal continuum model. In this study we discuss an approach to approximate full-boundary data from partial-boundary measurements that is based on the knowledge of the involved projections. The approximate full-boundary data can then be obtained as the solution of a suitable optimization problem on the coefficients of the Neumann-to-Dirichlet map. By this procedure we are able to improve the reconstruction quality of continuum model based algorithms, in particular we present the effectiveness with a D-bar method. Reconstructions are presented for noisy simulated and real measurement data.
\end{abstract}
\maketitle


\section{Introduction}
Electrical Impedance Tomography (EIT) is an emerging noninvasive imaging technique, that seeks to gain knowledge about the conductivity of an object by injecting currents at the boundary and measuring the resulting voltage distribution. In many applications it might be that we cannot access the full boundary and hence electrodes can only be placed on a subset. In this case the data error depends linearly on the length of the missing domain, as shown for the the continuum setting in \cite{Hauptmann2017} and for a realistic electrode setting in \cite{Hyvoenen2009}. In the following we show how to overcome this restriction by utilizing our knowledge of the involved operators and formulate a numerical procedure to approximate full-boundary data from potentially noisy electrode measurements.

In this study we consider a bounded domain $\Omega\subset\R^2$ with smooth boundary $\partial\Omega$ and the unknown conductivity $\sigma\in L^\infty(\Omega)$ is assumed to be strictly positive. 
 A mean free current $\psi$ is injected on a (possibly not connected) subset $\Gamma\subset\bndry$. 
The problem of EIT can be modelled by the conductivity equation with Neumann boundary condition,
\begin{equation}\label{eqn:introCondEq}
\begin{array}{rl}
\nabla\cdot\sigma\nabla u = 0, &\mbox{ in }\Omega,\\
\sigma \dnu u = \psi, &\mbox{ on } \Gamma \subset\bndry \\
\sigma \dnu u = 0, &\mbox{ on } \Gamma^c=\partial\Omega\backslash\Gamma.
\end{array}
\end{equation}
Here $\nu$ denotes the outward normal, for uniqueness we require $\int_{\bndry}u\;ds=0$ \newline and for well-posedness $\int_{\Gamma}\psi\;ds=0$. Ideally we want to measure the Neumann-to-Dirichlet map (ND map), also denoted as current-to-voltage map, that maps every possible current pattern to the corresponding voltage distribution on the boundary,
\begin{equation}\label{eqn:introNDmap}
\ND_\sigma:L_\diamond^{2}(\bndry) \to L_\diamond^{2}(\bndry),  \hspace{0.25 cm} \left.\ND_\sigma\psi=u\right|_{\bndry},
\end{equation}
where $L_\diamond^{2}(\bndry)$ denotes the space of mean free $L^2$-functions on the boundary.
In case of just partially supported currents we can not measure the full ND map, instead we obtain knowledge of a partial ND map. It has been proposed in \cite{Hauptmann2017}, that this partial ND map can be represented as a composition of the actual ND map and a partial-boundary map, that maps currents supported on the full boundary to the partial boundary. In this study we will utilize the knowledge of the involved mappings to formulate an optimization problem, by which we can compute an approximation to the (full-boundary) ND map.

Electrical impedance tomography is a highly nonlinear inverse problem, hence the reconstruction task is especially challenging and can benefit from improved data. Promising applications of EIT include pulmonary imaging \cite{Cinnella2015, Grant2011, Karagiannidis2015, Reinius2015}, along with nondestructive testing and evaluation of materials in industrial and engineering applications \cite{Hallaji2014, Hou2009, Karhunen2010, Kaup1995}. The partial-data problem is especially relevant for patient monitoring, since faulty or displaced electrodes can lead to severe corruption of data. Reconstructions from partial-boundary data typically suffer from geometrical distortion. Furthermore, less electrodes available for data acquisition lead to a loss of linearly independent basis functions for the representation of the measured ND map. Formulating an optimization problem on the coefficients of the ND map is a novel approach to obtain information of the full-boundary problem. This approximation process is capable of recovering important features lost in the ND map as well as extending the number of linearly independent basis functions. A related approach has been proposed in \cite{Harrach2015a}, where voltage data is recovered from missing or faulty electrodes by an interpolation on the sensitivity matrix.

Due to the aforementioned instabilities, carefully designed regularization strategies are needed. D-bar methods, for example, offer a direct and robust reconstruction procedure. In this study we use a D-bar method that has been specifically formulated for ND maps \cite{Hauptmann2017} to evaluate the improved data. This algorithm is based on a proven regularization strategy for full-boundary Dirichlet-to-Neumann data \cite{Knudsen2009}.  The biggest advantage of D-bar methods lies in their direct nature and hence they do not get stuck in local minima, they are robust to modelling errors \cite{Murphy2009}, are capable of real time imaging \cite{Dodd2014}, and recently allow the incorporation of prior information \cite{Alsaker2016a}. Other theoretically based methods that operate on (or can be adjusted to) ND maps, and hence are related to this study, include the monotonicity method \cite{Harrach2013,Harrach2015,Tamburrino2002}, the factorization method \cite{Kirsch2005,Lechleiter2006,Lechleiter2008a}, and the enclosure method \cite{Ikehata2000c,Ikehata2000a,Ikehata2004}. In particular additional regularization can be incorporated as discussed in \cite{Garde2016,Harrach2016a}.

Further approaches to deal with partial-boundary data are based on iterative algorithms that do not operate on ND maps, but rather use a forward map for EIT that maps the conductivity to voltage measurements on the boundary. These methods are typically very powerful due to flexibility in incorporating \emph{a priori} information, but tend to be sensitive to modelling errors in the forward solver. Methods addressing partial data include domain truncation approaches \cite{Calvetti2015,Calvetti2015a}, region of interest imaging \cite{Liu2015,Liu2015a}, and electrode configuratons covering parts of the boundary \cite{Mueller1999,Vauhkonen1999a}.

This paper is organized as follows. In Section \ref{sec:Models} we introduce an \emph{electrode continuum model} (ECM) that involves the partial ND map and is more suitable to interpret measurements from the well established \emph{complete electrode model} (CEM) \cite{Cheng1989,Somersalo1992} in a continuum setting. In Theorem \ref{theo:approx} we show that the approximation error of the ECM to the CEM for partial-boundary measurements depends linearly on the length of the electrodes, but is independent of the missing boundary. In particular that means the approximation property of the ECM to electrode measurements stays stable if the electrode size does not change, while the amount of covered boundary can decrease.

In Section \ref{sec:OptProb} we use the ECM to derive an optimization problem to obtain an approximate ND map, representing full-boundary data, from partial-boundary measurements. We discuss how to approximate continuum data from electrode measurements and state the proposed algorithm.
An auxiliary result shows that the ND map for rotationally symmetric conductivities can be obtained from the measurement of a single current pattern in the continuum setting. In Section \ref{sec:CompResults} we discuss computational aspects to approximate the ND maps and present reconstructions for simulated noisy partial-boundary data from the ECM and CEM. Additionally, we present the effectiveness for real measurements with a pairwise injection current pattern from the KIT4 system located in Kuopio, University of Eastern Finland. A short discussion is presented subsequently. Section \ref{sec:conclusion} presents the conclusions to this study.

\section{Continuum and electrode models}\label{sec:Models}
For the following analysis it is important to discuss the difference of continuum based models and the \emph{complete electrode model} for EIT. Most of the mathematical analysis is based on the continuum model, given by \eqref{eqn:introCondEq}, with a connected domain for current injection. The ideal measurement (optimally from full boundary) is given by the ND map $\ND_\sigma$ and is used in many theoretically based reconstruction algorithms.

Let us now introduce the basic setting for a realistic electrode set-up. Let the partial boundary $\Gamma\subset\bndry$ consist of a union of mutually disjoint electrodes $E_m,\ m=1,\dots,M$, with their characteristic functions $\chi_m$. Each electrode is modelled as a connected and open subset of the boundary $\bndry$. Hence we have $\Gamma=\cup_{m}E_m$, which serves as the domain of current input and voltage measurement. We assume throughout the paper that all electrodes have the same size, that is $|E_m|=|E|>0$ for all $m=1,\dots,M.$

Typically one assigns to each electrode a value for the current injection, similarly the measurement returns only one voltage value. Following \cite{Hyvoenen2009}, we define a space of piecewise constant functions for the electrode data by
\begin{equation}\label{eqn:piecewiseSpace}
T(\bndry):=\left\lbrace V\in L^2(\bndry) \left| \ V=\sum_{m=1}^M \chi_m V_m,\ V_m\in\R \text{, and } V=0 \text{ on } \Gamma^c \right. \right\rbrace.
\end{equation}
It follows clearly that $T(\bndry)\subset L^2(\bndry)$ and we denote $T_\diamond(\bndry)=T(\bndry)\cap L^2_\diamond(\bndry)$.

Real measurement data is typically modelled by the \emph{complete electrode model} (CEM). The first difference to \eqref{eqn:introCondEq} is that the input current is modelled to equal the integral over the current density on each electrode. Furthermore, the CEM takes contact impedances into account, that model the electrochemical effect of a thin resistive layer forming between the electrodes and the measured target, which is incorporated by a Robin boundary condition. The full model is then given for the input current $J\in \Tin$ and measured voltages $U\in \Tout$ by
\begin{equation}
\label{eqn:CEM}
\left\{
\begin{array}{rcl}
\nabla\cdot\sigma\nabla u &=& 0 \mbox{ in }\Omega,\\
\sigma \dnu u &=& 0 \mbox{ on }\partial\Omega\backslash {\Gamma} \\
u+z \sigma \dnu u &=& U \mbox{ on }\Gamma \\
\frac{1}{|E_m|}\int_{E_m} \sigma \dnu u &=& J_m \mbox{ for } 1\leq m \leq M.
\end{array}
\right.
\end{equation}
Here we assume a constant contact impedance $z>0$. Further, we assume that the potentials are mean free, i.e. $\int_{\bndry}u\; ds=0$. The corresponding measurement operator for the CEM is given by
\[
\ND^{E}_\sigma:J\mapsto U, \hspace{0.5cm}  \Tin \to \Tout.
\]
This operator is linear and continuous as discussed in \cite{Hyvoenen2004,Hyvoenen2009}.

\subsection{Modelling continuum partial-boundary measurements}\label{sec:ModelCont}
The construction presented here follows the previous study in \cite{Alsaker2017,Hauptmann2017}. We consider a two dimensional bounded domain $\Omega\subset\R^2$, a current with zero mean on the partial boundary $\Gamma\subset\bndry$ is injected. Since the currents can be only injected on a subset, we use a subspace of functions supported on $\Gamma$ denoted as
\[
\parspace:=\left\{\varphi\in \Hhalf \left|\ \supp( \varphi)\subset\Gamma \text{ and } \int_\Gamma \varphi = 0\right\}\right. .\]
We note that in the electrode setting we clearly have $\Tin\subset\parspace\subset\Hhalf$. Let now ${\psi}\in\parspace$, then we can state the conductivity equation \eqref{eqn:introCondEq} with Neumann boundary condition simplified as 
\begin{equation*}
\begin{array}{rl}
\nabla\cdot\sigma\nabla u = & 0, \quad \mbox{ in }\Omega,\\
\sigma \frac{\partial u}{\partial \nu}  =& \psi, \quad  \mbox{ on } \bndry. \\
\end{array}
\end{equation*}
The measurement is modelled by application of the ND map, that is given a current pattern $\psi\in\parspace$, we obtain
\[
\ND_\sigma \psi = u|_{\bndry}.
\]
The resulting voltages are supported on the whole boundary $\bndry$ and represent an ideal measurement. Let us assume for now that we can measure on the full-boundary $\bndry$. 
In order to understand partial-boundary measurements we introduce a linear and bounded operator $\parI:\Hmhalf\to\parspace$ and define the partial ND map by
\[
\ND^\Gamma_\sigma:=\ND_\sigma\parI. 
\]
A choice for the operator $\parI$ is introduced in the following Section \ref{sec:parBoundMap}, also choices in a purely continuum setting are discussed in \cite{Hauptmann2017}. For computational purposes one wants to represent the ND map with respect to an orthonormal basis $\varphi_n\in\Hhalf $ for $n\in\Z\backslash \{0\}$.
Let the partial-boundary function $\psi$ be produced by $\psi=\parI\varphi$ for some basis function $\varphi\in\Hmhalf$. Then we immediately obtain the identity of ND maps
\begin{equation}
\label{eq:centralIdent}
\ND_\sigma\psi={\ND}_\sigma\parI\varphi={\ND}^\Gamma_\sigma \varphi.
\end{equation}
Further, this means for the measurement 
\[
\ND_\sigma\psi=\ND^\Gamma_\sigma \varphi = u|_{\bndry}
\]
from which we see, that in fact we measure the partial ND map
\[
\ND^\Gamma_\sigma:\Hhalf\to\Hhalf,
\]
with respect to the basis function $\varphi$. This way we are able to represent a finite-dimensional matrix approximation to the ND map with respect to the chosen basis. We denote this matrix approximation of the {partial ND map} by $\mathbf{R}^\Gamma_\sigma$ and compute it from the measurements $\ND_\sigma\psi_n=u_n|_{\bndry}$ by
\begin{equation}\label{eqn:matrixApprox}
({\mathbf{R}}^\Gamma_\sigma)_{n,\ell}=({\ND}^\Gamma_\sigma \varphi_n,\varphi_\ell)=(\ND_\sigma\parphi_n,\varphi_\ell)=\int_{\bndry} u_n|_{\bndry}(s)\varphi_\ell(s) ds.
\end{equation}

The question we want to investigate in the following is, whether we can recover information of the (full-boundary) ND map $\ND_\sigma$ from the measurement of $\ND^\Gamma_\sigma$. From the identity \eqref{eq:centralIdent}, we notice that we have knowledge of $\varphi$ and $\psi$, as well as a measurement of $\ND^\Gamma_\sigma$. This leaves us with one unknown, the ND map $\ND_\sigma$. This identity will be the basis of the approximation process in Section \ref{sec:OptProb}.

It is easily seen by linearity that the central identity \eqref{eq:centralIdent} holds for the difference maps $\ND_{\sigma,1}:=\ND_\sigma-\ND_1$, resp. ${\ND}^\Gamma_{\sigma,1}:={\ND}^\Gamma_\sigma-{\ND}^\Gamma_1$. The difference map is inherent to most continuum based methods and hence we will work mostly with the difference map. It is important to mention that the reference does not need to be the unit conductivity, in fact it can be any smooth conductivity, see for more \cite{Mueller2012,Novikov2004}.

\subsection{A continuum model for electrode data}\label{sec:parBoundMap}
In this section we relate the partial-boundary problem in the continuum setting to the complete electrode model. The construction follows essentially the work in \cite{Hyvoenen2009}. The main differences are, that we concentrate on the partial-boundary problem with larger parts missing and establish an error estimate that depends on the length of the electrodes, rather than the length of the missing boundary as in the original work \cite{Hyvoenen2009}. In particular, we show that by adjusting the continuum model we can obtain a better error estimate to CEM measurements than considering the classical continuum setting, that means we have a better interpretation of CEM measurements for the optimization procedure discussed in Section 3.

First we need to utilize the concept of extended electrodes $\{\widetilde{E}_m\}_{m=1}^M$, that we define as open, connected and mutually disjoint subsets of $\bndry$, with the property that
\[
E_m\subset\overline{\widetilde{E}}_m \quad \text{and} \quad \bigcup_{m=1}^M \overline{\widetilde{E}}_m = \bndry, \quad \text{for } m=1,\dots,M.
\]
Now we can present the involved projections, for a suitable electrode input from continuum data we use the nonorthogonal projection, introduced in \cite{Hyvoenen2009}, given by
\begin{equation}\label{eqn:nonortho_proj}
Q: \varphi \mapsto \sum_{m=1}^M \frac{{\chi}_{_m}}{|E_m|}\int_{\widetilde{E}_m} \varphi\ ds, \quad L^{2}(\bndry)\to T(\bndry).
\end{equation}
In this context the partial ND map is given as $
{\ND}^\Gamma_{\sigma,1}=\ND_{\sigma,1}Q.$ 
For the measurement we can only access the data at each electrode separately, to model this process appropriately, we use the orthogonal projection
\begin{equation}\label{eqn:ortho_proj}
P: g \mapsto \sum_{m=1}^M \frac{{\chi}_{_{m}}}{|E_m|} \int_{E_m} g \ ds, \quad L^{2}(\bndry)\to T(\bndry) 
\end{equation}
together with the linear extension operator
\begin{equation}\label{eqn:ortho_projExtension}
L: g \mapsto \sum_{m=1}^M {\widetilde{\chi}_{_{m}}} g_m, \quad T(\bndry)\to L^{2}(\bndry) . 
\end{equation}
It can be seen by straightforward calculations that $LP$ is the adjoint of $Q$ in $L^2(\bndry)$. 

Given an input current $\varphi\in\Hhalf$, the full measurement process is then modelled by 
\[
g=LP{\ND}^\Gamma_{\sigma}\varphi=LP\ND_{\sigma}Q\varphi.
\]
A similar construction has been already used in the computational study \cite{Alsaker2017}. This model can be seen as a continuum interpretation of an electrode setting and as we show in the following, it is more suitable for interpreting electrode measurements on a partial boundary. We will call the corresponding problem the {\em electrode continuum model} (ECM), summarized by
\begin{equation}
\label{eqn:ECM}
\left\{
\begin{array}{rcl}
\nabla\cdot\sigma\nabla u &=& 0 \mbox{ in }\Omega,\\
\sigma\dnu u &=& Q\varphi \mbox{ on } \Gamma, \\
Pu &=& U \mbox{ on }\Gamma.
\end{array}
\right.
\end{equation}
Following the formulation for the CEM in \eqref{eqn:CEM} we assume here as well that $\int_{\bndry}u\; ds=0$. It can be easily seen that $P{\ND}^\Gamma_{\sigma}$ is the corresponding measurement mapping that maps $\varphi$ to $U$; note first that $\ND_\sigma^\Gamma=\ND_\sigma Q$ and by definition we have that a solution $u$ of \eqref{eqn:introCondEq} with $\psi=Q\varphi$ solves the first two lines in \eqref{eqn:ECM}. Further, let $U=Pu$, then $u$ satisfies \eqref{eqn:ECM}.

In the following we analyse how \eqref{eqn:ECM} is connected to measurements from the CEM, but first we need to establish a few assumptions on the geometry. The most important property is that we do assume a lower bound for the ratio of the electrode size $|E|$ and the size of extended electrodes by
\begin{equation}\label{eq:elecSizeCond}
\frac{|\Gamma|}{|\bndry|}\geq \min_{m=1,\dots,M}\frac{|E|}{|\widetilde{E}_m|}\geq c_E>0.
\end{equation}
We note that in case of equidistant electrodes with equally sized extended electrodes, i.e. the distance between all neighbouring electrodes is the same, the first inequality is an equality. Anyhow, we are here particularly interested in the case were electrodes do not cover a certain part of the boundary and hence this will be a strict inequality and in this case we further need to assume that the electrode size is bounded from below by $|E|\geq c >0$. Given the condition \eqref{eq:elecSizeCond}, we can establish similar to \cite{Hyvoenen2009} that
\begin{equation}\label{eq:nonOrthobound}
\|Q\varphi\|_{L^2(\bndry)} \leq \frac{1}{\sqrt{c_E}}\|\varphi\|_{L^2(\bndry)} \text{ and }\|LP\varphi\|_{L^2(\bndry)}\leq \frac{1}{\sqrt{c_E}}\|\varphi\|_{L^2(\bndry)}.
\end{equation}
We note that $P$ is an orthogonal projection and hence the constant in the second bound is due to the linear extension $L$.

The following results summarize the relation of the projections $Q$ and $P$ in case of functions supported on the partial boundary $\Gamma$.
\begin{lemma}\label{lem:selfAd}
Let $\varphi\in L^2(\bndry)$ with $\supp(\varphi)\subset\Gamma$, then $Q\varphi=P\varphi$. Furthermore, it holds that
\begin{equation}\label{eq:selfAdjointness}
(P\varphi,\psi)_{L^2(\bndry)}=(\varphi,P\psi)_{L^2(\bndry)}, \hspace{0.25 cm} \text{ for all } \psi \in L^2(\bndry).
\end{equation}
\end{lemma}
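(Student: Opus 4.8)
The plan is to prove both assertions by a direct computation from the defining formulas \eqref{eqn:nonortho_proj} and \eqref{eqn:ortho_proj}; the only subtlety is the bookkeeping of the measure-zero overlaps of the (extended) electrodes on the curve $\bndry$.

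For the identity $Q\varphi=P\varphi$, I would fix $m$ and compare the $m$-th coefficients, i.e. $|E_m|^{-1}\int_{\widetilde{E}_m}\varphi\,ds$ against $|E_m|^{-1}\int_{E_m}\varphi\,ds$. Since $\supp(\varphi)\subset\Gamma=\bigcup_k E_k$, write $\int_{\widetilde{E}_m}\varphi\,ds=\sum_k\int_{\widetilde{E}_m\cap E_k}\varphi\,ds$. The geometric facts I would invoke are: (i) for $k\neq m$ the sets $\widetilde{E}_m,\widetilde{E}_k$ are open and disjoint, hence $\widetilde{E}_m\cap\overline{\widetilde{E}}_k=\emptyset$, and since $E_k\subset\overline{\widetilde{E}}_k$ this gives $\widetilde{E}_m\cap E_k=\emptyset$; (ii) $E_m\subset\overline{\widetilde{E}}_m$, and because $E_m$ and $\widetilde{E}_m$ are arcs of the smooth curve $\bndry$, $E_m\setminus\widetilde{E}_m\subset\partial\widetilde{E}_m$ is a finite set, of arclength zero, so $\int_{\widetilde{E}_m\cap E_m}\varphi\,ds=\int_{E_m}\varphi\,ds$. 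Combining (i) and (ii) yields $\int_{\widetilde{E}_m}\varphi\,ds=\int_{E_m}\varphi\,ds$ for every $m$; multiplying by $\chi_m/|E_m|$ and summing gives $Q\varphi=P\varphi$.

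For the second claim I would just expand, for arbitrary $\varphi,\psi\in L^2(\bndry)$, using that $\int_\bndry\chi_m\psi\,ds=\int_{E_m}\psi\,ds$:
\[
(P\varphi,\psi)_{L^2(\bndry)}=\sum_{m=1}^M\frac{1}{|E_m|}\Big(\int_{E_m}\varphi\,ds\Big)\Big(\int_{E_m}\psi\,ds\Big),
\]
which is manifestly symmetric in $\varphi$ and $\psi$; reading this backwards gives $(P\varphi,\psi)_{L^2(\bndry)}=(\varphi,P\psi)_{L^2(\bndry)}$. (Equivalently, one notes $P$ is the orthogonal projection of $L^2(\bndry)$ onto $T(\bndry)$ and is therefore self-adjoint; observe also that this part needs no support assumption on $\varphi$.)

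I do not expect a genuine obstacle here: the whole content is that the passage from $\widetilde{E}_m$ to $E_m$ inside the integral is harmless once $\varphi$ is supported on $\Gamma$, and this rests solely on the disjointness/covering properties of $\{\widetilde{E}_m\}$ and on endpoints of arcs being null sets. The main thing to be careful about is stating these null-set arguments cleanly rather than any analytic difficulty.
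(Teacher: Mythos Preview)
Your proposal is correct and follows essentially the same approach as the paper: a direct computation from the definitions of $P$ and $Q$. The paper dispatches the identity $Q\varphi=P\varphi$ in one line (``follows directly from the definition''), whereas you spell out the disjointness and null-set bookkeeping explicitly; for the self-adjointness both you and the paper expand $(P\varphi,\psi)$ into the symmetric sum $\sum_m |E_m|^{-1}\big(\int_{E_m}\varphi\big)\big(\int_{E_m}\psi\big)$. Your parenthetical remark that the second identity needs no support hypothesis on $\varphi$ is a correct sharpening the paper does not make.
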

\begin{proof}
The identity $Q\varphi=P\varphi$ follows directly from the definition. The identity \eqref{eq:selfAdjointness} can be shown by direct calculations. Let $\varphi\in L^2(\bndry)$ with $\supp(\varphi)\subset\Gamma$ and $\psi\in L^2(\bndry)$, then
\begin{align*}
(P\varphi,\psi)_{L^2(\bndry)}&=\int_{\bndry}\sum_{m=1}^M \frac{\chi_m(x)}{|E_m|}\int_{E_m} \varphi(y) ds_y \psi(x) ds_x \\
&=\sum_{m=1}^M \frac{1}{|E_m|}\int_{E_m}\chi_m(x)\psi(x)ds_x \int_{E_m} \chi_m(y)\varphi(y) ds_y  \\
&=\sum_{m=1}^M \int_{E_m}\frac{\chi_m(y)}{|E_m|}\int_{E_m}\psi(x)ds_x \varphi(y) ds_y  \\
&= \int_{\bndry}\sum_{m=1}^M\frac{\chi_m(y)}{|E_m|}\int_{E_m}\psi(x)ds_x \varphi(y) ds_y  =(\varphi,P\psi)_{L^2(\bndry)}.
\end{align*}
\end{proof}


Now we can establish the error estimate between measurements of the ECM and CEM. The following theorem shows that the approximation error depends linearly on the size of the electrodes. Since we are interested in a partial-boundary setting, the essential result is that the error does not depend on the missing boundary.
\begin{theorem}\label{theo:approx}
Given the measurement operator $\ND^{E}_\sigma$ for the complete electrode model \eqref{eqn:CEM} and $P\ND^\Gamma_\sigma=P\ND_\sigma Q$ for the electrode continuum model \eqref{eqn:ECM}. Let the electrodes be such that \eqref{eq:elecSizeCond} is satisfied, then the following estimate holds
\begin{equation}\label{eqn:operatorIden}
\|LP\ND^\Gamma_\sigma-L(\ND^{E}_\sigma-zI)Q\|_{L^2(\bndry)\to L^2(\bndry)} \leq \frac{C|E|}{{c_E}}.
\end{equation}
\end{theorem}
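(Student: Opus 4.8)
The plan is to fix a (mean-free) current pattern $\varphi$ and to compare two boundary potentials produced by the same electrode currents $Q\varphi$: the continuum potential $\hat u$ solving \eqref{eqn:introCondEq} with Neumann datum $\psi=Q\varphi$, for which $\ND^\Gamma_\sigma\varphi=\hat u|_\bndry$ and the ECM output is $P\hat u$; and the solution $w$ of the CEM \eqref{eqn:CEM} with injected currents $J=Q\varphi$. The first step is algebraic. Averaging the Robin condition $w+z\sigma\dnu w=U$ over one electrode $E_m$ and using $\frac1{|E_m|}\int_{E_m}\sigma\dnu w=J_m=(Q\varphi)_m$ gives $U_m-z(Q\varphi)_m=(Pw)_m$, hence $(\ND^E_\sigma-zI)Q\varphi=Pw$ as elements of $T(\bndry)$. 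Therefore the operator in \eqref{eqn:operatorIden} maps $\varphi$ to $LP(\hat u-w)|_\bndry$, and by the second bound in \eqref{eq:nonOrthobound} it is enough to prove $\|(\hat u-w)|_\bndry\|_{L^2(\bndry)}\le C\,c_E^{-1/2}|E|\,\|\varphi\|_{L^2(\bndry)}$.

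Set $e=\hat u-w$. Both $\hat u$ and $w$ are $\sigma$-harmonic in $\Omega$ with vanishing boundary mean, and their conormal derivatives coincide on $\Gamma^c$ (both zero) while $\sigma\dnu e=Q\varphi-\sigma\dnu w$ on $\Gamma$. Since $Q\varphi$ is piecewise constant and equals the $P$-projection of $\sigma\dnu w$ on $\Gamma$, this datum is exactly the oscillating part of the CEM current density, $\sigma\dnu e=-(I-P)(\sigma\dnu w)$, and it has zero mean on every electrode. Green's formula with the test function $e$, discarding the component $Pe$ of the trace (it is integrated against a function with vanishing electrode means, so the contribution is zero; cf.\ Lemma~\ref{lem:selfAd}), yields the identity
\[
\int_\Omega\sigma|\nabla e|^2\,dx=-\int_\Gamma (I-P)(\sigma\dnu w)\,(I-P)e\;ds\le \|(I-P)(\sigma\dnu w)\|_{L^2(\Gamma)}\,\|e\|_{L^2(\bndry)}.
\]
Because $\sigma$ is bounded below and $\int_\bndry e\;ds=0$, the Poincar\'e inequality on $\Omega$ and the trace theorem bound the left-hand side below by $c_1\|e\|_{L^2(\bndry)}^2$ with $c_1=c_1(\Omega,\sigma)>0$, so $\|e\|_{L^2(\bndry)}\le c_1^{-1}\|(I-P)(\sigma\dnu w)\|_{L^2(\Gamma)}$.

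It remains to extract the factor $|E|$. On a single electrode $E_m$, an arc of length $|E|$, the function $(I-P)(\sigma\dnu w)$ is $\sigma\dnu w$ minus its average, so the one-dimensional Poincar\'e--Wirtinger inequality gives $\|(I-P)(\sigma\dnu w)\|_{L^2(E_m)}\le\frac{|E|}{\pi}\|\partial_T(\sigma\dnu w)\|_{L^2(E_m)}$; summing over $m$ leaves $\|(I-P)(\sigma\dnu w)\|_{L^2(\Gamma)}\le\frac{|E|}{\pi}\|\partial_T(\sigma\dnu w)\|_{L^2(\Gamma)}$. Together with $\|Q\varphi\|_{L^2(\bndry)}\le c_E^{-1/2}\|\varphi\|_{L^2(\bndry)}$ from \eqref{eq:nonOrthobound}, the estimate closes once one has the a priori regularity bound $\|\partial_T(\sigma\dnu w)\|_{L^2(\Gamma)}\le C(\Omega,\sigma,z)\,\|Q\varphi\|_{L^2(\bndry)}$ for CEM solutions, with a constant independent of the number and placement of the electrodes. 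This last step is the one I expect to be the real obstacle: it requires tangential $H^1$-control of the CEM conormal derivative up to the electrode edges, where the boundary condition changes type from Robin to homogeneous Neumann and the solution is only mildly regular, and it has to be quantified uniformly in the electrode configuration; here one should lean on (and if necessary refine) the regularity analysis of the complete electrode model in \cite{Hyvoenen2004,Hyvoenen2009}. In passing one should also verify that $\sigma\dnu w\in L^2(\bndry)$, so that the boundary integrals above are literal, and keep careful track of the projections $P,Q,L$ in the first two steps.
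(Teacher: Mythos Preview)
Your reduction to $LP(\hat u-w)$ and the identification $\sigma\partial_\nu e=-(I-P)(\sigma\partial_\nu w)$ match the paper exactly, and your algebraic first step is the same identity the paper quotes from \cite{Hyvoenen2009}. The divergence comes in how you extract the factor $|E|$, and the step you yourself flag as ``the real obstacle'' is a genuine gap, not a technicality.

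You apply the Poincar\'e--Wirtinger inequality \emph{directly to $\sigma\partial_\nu w$} on each electrode, which forces you to control $\|\partial_T(\sigma\partial_\nu w)\|_{L^2(\Gamma)}$. On $E_m$ the Robin condition gives $\sigma\partial_\nu w=(U_m-w)/z$, so this amounts to $w|_{E_m}\in H^1(E_m)$, i.e.\ essentially $H^{3/2}$-regularity of the CEM solution up to the electrode edges. Because the boundary condition changes type (Robin to homogeneous Neumann) at those edges, the CEM solution generically carries corner-type singularities there, and a uniform tangential $H^1$ bound on the current density is not furnished by the standard theory; what \cite{Hyvoenen2009} supplies is only the $L^2$ estimate $\|\sigma\partial_\nu w\|_{L^2(\bndry)}\le C\|Q\varphi\|_{L^2(\bndry)}$.

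The paper avoids this entirely by trading your energy estimate for the single-layer/Neumann-to-Dirichlet bound $\|e\|_{L^2(\bndry)}\le C\|\sigma\partial_\nu e\|_{H^{-1}(\bndry)}$ (cf.\ \cite[Lemma~7.1]{Nachman1996}), and then using the duality definition of the $H^{-1}$ norm together with the self-adjointness of $P$ from Lemma~\ref{lem:selfAd} to move $(P-I)$ onto the test function $\psi\in H^1(\bndry)$:
\[
\|(P-I)\sigma\partial_\nu w\|_{H^{-1}(\bndry)}=\sup_{\|\psi\|_{H^1(\bndry)}=1}\int_{\Gamma} \sigma\partial_\nu w\,(P-I)\psi\,ds\le C|E|\,\|\sigma\partial_\nu w\|_{L^2(\Gamma)}.
\]
Now Poincar\'e is applied to $\psi\in H^1(\bndry)$, for which $\|(P-I)\psi\|_{L^2(\Gamma)}\le C|E|\|\psi\|_{H^1(\bndry)}$ is immediate, and the only regularity needed on the CEM side is $\sigma\partial_\nu w\in L^2(\bndry)$. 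Your argument can be repaired by precisely this switch: estimate the trace of $e$ via the $H^{-1}$ norm of its Neumann datum, transfer the projection across the duality pairing, and apply the electrode-wise Poincar\'e inequality to the test function rather than to the current density.
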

\begin{proof}
This proof is an adjustment of \cite[Theorem 4.1]{Hyvoenen2009} to our setting, we summarize the essential parts and present changes. In particular, we show that for any function $\varphi\in L^2_\diamond(\bndry)$ that the $L^2$-norm is bounded by 
\begin{equation}\label{eqn:normDiffOperat}
\|(LP\ND^\Gamma_\sigma-L(\ND^{E}_\sigma-zI)Q)\varphi\|_{L^2(\bndry)}\leq \frac{C|E|}{{c_E}}\|\varphi\|_{L^2(\bndry)}.
\end{equation}
The solutions of the ECM \eqref{eqn:ECM} are denoted by $u_0$ and for the CEM \eqref{eqn:CEM} by $u$. By \cite{Hyvoenen2009} we have that $L\ND^E_\sigma Q\varphi=LP(u|_{\bndry})+zLQ\varphi$. This leaves us to estimate
\[
\|(LP\ND^\Gamma_\sigma-L(\mathcal{R}^{E}_\sigma-zI)Q\varphi\|_{L^2(\bndry)} = \|LP(u_0 - u)\|_{L^2(\bndry)}.
\]

\noindent
We now use the boundedness of $LP$ \eqref{eq:nonOrthobound} and estimate the solutions on the boundary by their corresponding Neumann data (boundedness of the single layer operator, see for instance \cite[Lemma 7.1]{Nachman1996}), then we obtain
\begin{align*}
\nonumber \|LP(u_0 - u)\|_{L^2(\bndry)} &\leq \frac{1}{\sqrt{c_E}} \|u_0 - u\|_{L^2(\bndry)} 
\leq \frac{C}{\sqrt{c_E}} \|\sigma\dnu (u_0 - u)\|_{H^{-1}(\bndry)}.
\end{align*}
By the boundary condition of the ECM we have $\sigma\dnu u_0 = Q\varphi$. Further, we note that the Neumann data are both supported on $\Gamma$ and using Lemma \ref{lem:selfAd}, we obtain
\begin{align*}
\nonumber \|\sigma\dnu (u_0 - u)\|_{H^{-1}(\bndry)}  &= \| Q\varphi -\sigma\dnu u\|_{H^{-1}(\bndry)}
 = \|(P-I) \sigma\dnu u\|_{H^{-1}(\bndry)} .
\end{align*}
Using the definition of the $H^{-1}$-norm by its dual space and by \eqref{eq:selfAdjointness} in Lemma \ref{lem:selfAd} we get
\begin{align*}
\|(P-I)\sigma\dnu u\|_{H^{-1}(\bndry)}&=\sup_{\|\psi\|_{H^1(\bndry)}=1} \int_{\bndry} (P-I)\sigma\dnu u  \psi \ ds \\
&=\sup_{\|\psi\|_{H^1(\bndry)}=1} \int_{\bndry} \sigma\dnu u (P-I)\psi \ ds \\ 
&\leq \sup_{\|\psi\|_{H^1(\bndry)}=1} \|\sigma\dnu u \|_{L^2( \Gamma)} \| (P-I) \psi\|_{L^2( \Gamma)}.
\end{align*}
The last term can be estimated using a one-dimensional Poincar{\'e} inequality \cite{Lieb2003} on each electrode separately, together with the bound on $\psi\in H^1(\bndry)$ this gives
\[ 
\sup_{\|\psi\|_{H^1(\bndry)}=1} \| (P-I) \psi\|_{L^2( \Gamma)}\leq C |E|.
\]
By \cite[Lemma 2.1]{Hyvoenen2009} it holds that $\|\sigma\dnu u \|_{L^2( \bndry)}\leq C\|Q\varphi\|_{L^2( \bndry)}$ and we obtain with \eqref{eq:nonOrthobound} the claim
\[
\|LP(u_0 - u)\|_{L^2(\bndry)} \leq 
\frac{C|E|}{\sqrt{c_E}} \|Q\varphi \|_{L^2( \bndry)}\leq \frac{C|E|}{{c_E}} \|\varphi \|_{L^2( \bndry)}.
\]
\end{proof}
The connection to \cite[Theorem 4.1]{Hyvoenen2009} can be seen easily if we assume the electrodes to be equidistant with $|E|\to 0$ and $|\Gamma|\to|\bndry|$, that means the amount of electrodes $M\to\infty$. The important difference in the above estimate is that our estimate does not depend on the missing boundary and hence under constant electrode size it stays stable if larger parts of the domain are not covered.

On a more practical issue, given a fixed number of electrodes, in order to maximize the signal-to-noise ratio one should use larger electrodes \cite{Gisser1990} as well as if the electrode size becomes small, the power required to push a current through becomes large \cite{Cheney1992}. That means in practice the electrodes won't be too small. Thus, the estimate \eqref{eqn:operatorIden} states that independent of the measurement domain and amount of electrodes used (for fixed $c_E$), that the proposed model is a stable approximation in the continuum setting for electrode measurements, that can be used for further analysis.

Furthermore, the result can be extended to difference data, if one assumes that the contact impedance does not change for different conductivities and between measurements. Then we are left with
\[
\|LP\ND^\Gamma_{\sigma,1}-L\ND^{E}_{\sigma,1}Q\|_{L^2(\bndry)\to L^2(\bndry)} \leq \frac{C |E|}{c_E}.
\]

\section{Formulating the optimization problem}\label{sec:OptProb}

\begin{figure}[t!]
\centering
\begin{picture}(350,250)
\put(0,130){\includegraphics[width=150 pt]{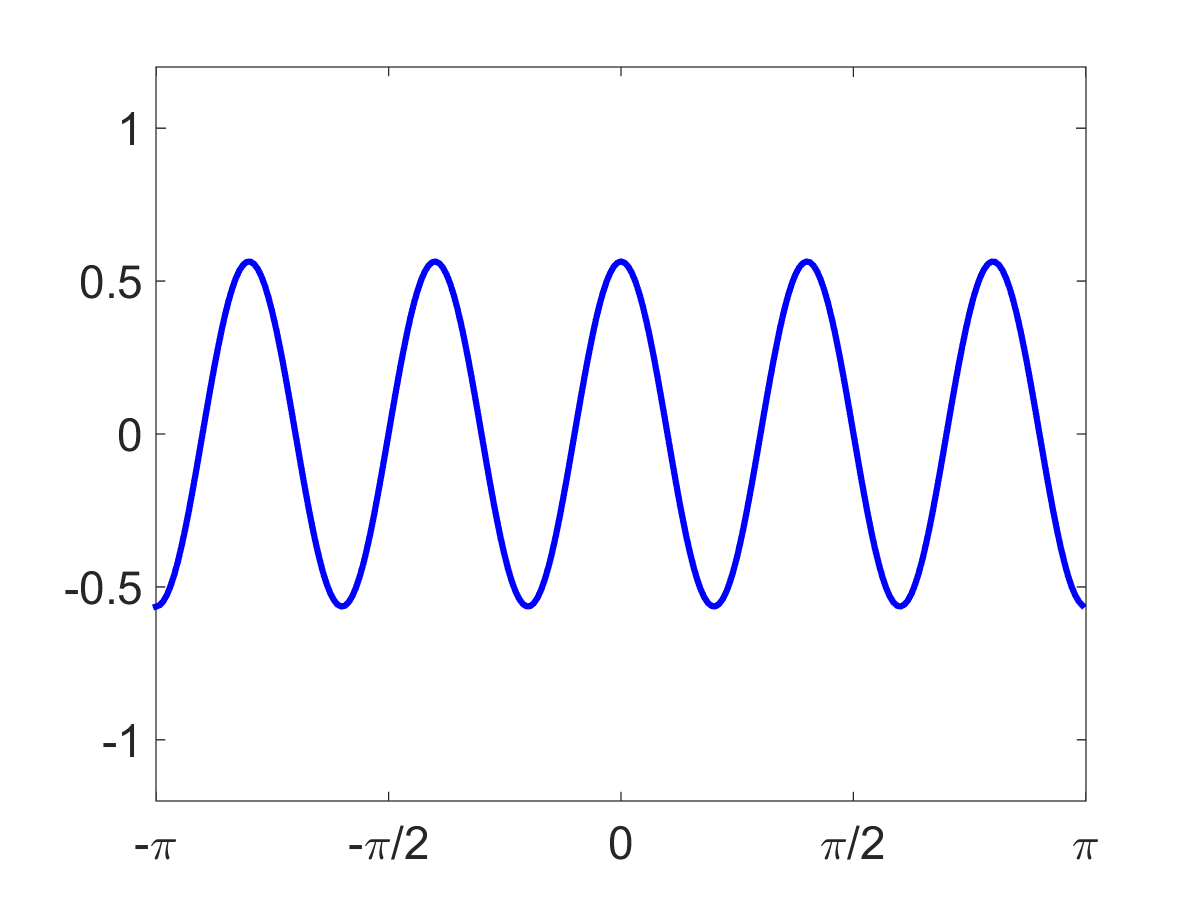}}
\put(200,130){\includegraphics[width=150 pt]{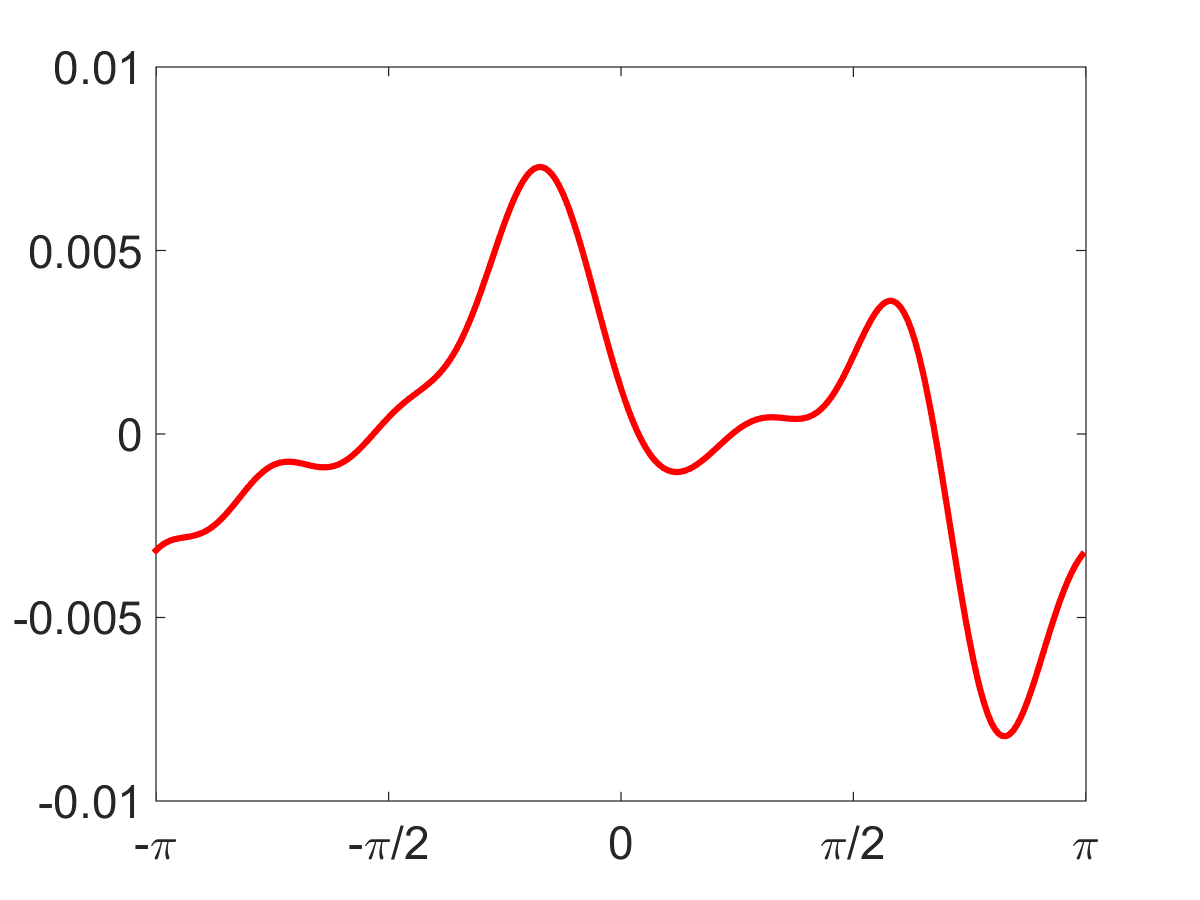}}
\put(0,0){\includegraphics[width=150 pt]{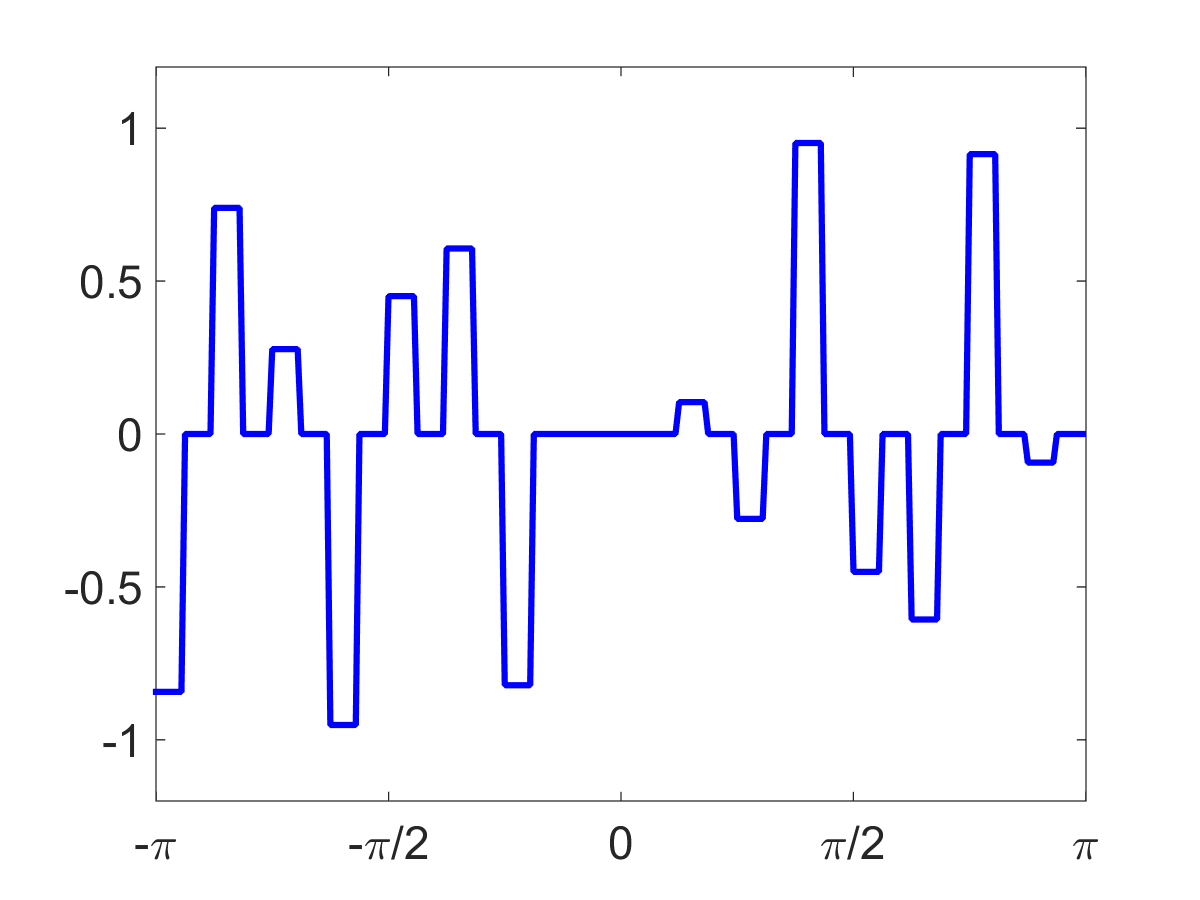}}
\put(200,0){\includegraphics[width=150 pt]{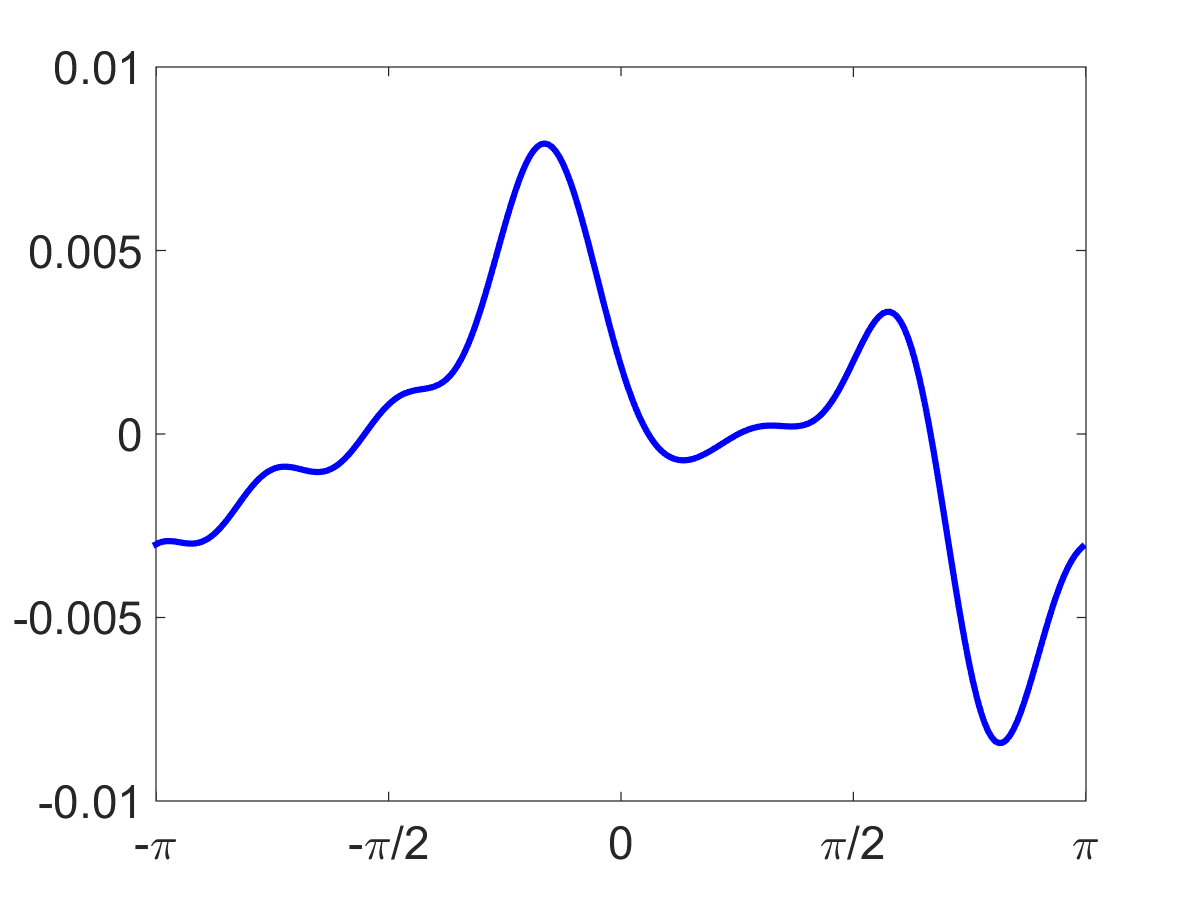}}
\put(60,245){Input $\varphi$}
\put(45,115){Input $\psi=Q\varphi$}

\put(163,195){$\mathbf{R}^\Gamma_{\sigma,1}$}
\put(163,65){${\mathbf{R}}_{\sigma,1}$}
\thicklines
\linethickness{.4mm}
\put(150,188){\vector(1,0){50}}
\thicklines
\linethickness{.4mm}
\put(150,58){\vector(1,0){50}}

\put(260,115){${\mathbf{R}}_{\sigma,1}\psi$}
\put(260,245){${\mathbf{R}}^\Gamma_{\sigma,1}\varphi$}

\end{picture}
\caption{\label{fig:mapping}Illustration of the central identity for the basis function $\varphi_5(\theta)=\cos(5\theta)/\sqrt{\pi}$ (top left) and its nonorthogonal projection to 14 electrodes with a gap around the angular value $\theta=0$ (bottom left). The two traces on the right represent the results after applying the ND map (bottom) and the partial ND map (top).} 
\end{figure} 
In this section we will discuss how to approximate the ND map corresponding to full-boundary data from partial-boundary measurements, or rather the matrix approximation of the ND map from partial-boundary measurements. The following derivations are based on measurements from the ECM, but can be applied to real electrode data as motivated by Theorem \ref{theo:approx}.
Let us for now omit the extension operator $LP$ and only consider the partial ND map ${\ND}^\Gamma_\sigma$ and the ND map $\ND_\sigma$. We remind that in the central identity we have
$
\ND_\sigma\psi={\ND}^\Gamma_\sigma \varphi,
$
where we know $\varphi$, $\psi$ and we can measure ${\ND}^\Gamma_\sigma$. That suggests we can recover some information about the ND map.  As illustrated in Figure \ref{fig:mapping}, the two traces on the right are rather similar and hence we can deduce some characteristics of the ND matrix $\NDdiffMat$. We consider in the following the measured difference maps given as matrices $\NDdiffMat$ and $\pNDdiffMat$ and the nonorthogonal projection $Q$ defined in \eqref{eqn:nonortho_proj}. We would like to note, that the construction works also for different choices of partial-boundary maps. Let the orthonormal basis functions of order $n\in\mathcal{N}=\{-N/2,\dots -1,1,\dots,N/2\}$, for $N\in\N$ even, be given by
\[
\varphi_n(\theta)=\left\{
\begin{array}{cl}
\frac{1}{\sqrt{\pi}}\sin(n\theta) &\text{ if } n<0,\\
\frac{1}{\sqrt{\pi}}\cos(n\theta) &\text{ if } n>0.
\end{array}\right.
\]
The measurement for the basis $\varphi_n$ is then given by $g_n:=\NDdiff Q\varphi_n$ and we denote the partial-boundary current by $\psi_n=Q\varphi_n$. The first approach that comes to ones mind is to find $\NDdiff$ as minimizer of
\begin{equation}\label{eqn:firstOptApp}
\min_{\NDdiff} \sum_{n\in \mathcal{N}} \|\NDdiff\psi_n-\pNDdiff\varphi_n\|^2_{L^2(\bndry)}.
\end{equation}
We want to formulate \eqref{eqn:firstOptApp} as a more reasonable optimization problem in a finite dimensional space with the matrix $\NDdiffMat$ sought for. As discussed in Section \ref{sec:ModelCont}, the matrices are given with respect to the chosen basis as
\begin{align*}
(\pNDdiffMat)_{n,j}&=(\pNDdiff \varphi_n,\varphi_j)_{L^2(\bndry)}=(\NDdiff \psi_n,\varphi_j)_{L^2(\bndry)},  \\
(\NDdiffMat)_{n,j}&=(\NDdiff \varphi_n,\varphi_j)_{L^2(\bndry)}.
\end{align*}
Further, we need the coefficients of the basis functions and the measurement:
\begin{align*}
(\hat{\psi}_n)_j &= ( \psi_n,\varphi_j)_{L^2(\bndry)}=(Q \varphi_n,\varphi_j)_{L^2(\bndry)},  \\
(\hat{g}_n)_j &= ( g_n,\varphi_j)_{L^2(\bndry)}=(\NDdiff Q\varphi_n,\varphi_j)_{L^2(\bndry)} .
\end{align*}
We note that in fact $(\pNDdiffMat)_{n,j}=(\hat{g}_n)_j$. Now, given the coefficients of the nth basis function $\hat{\psi}_n$, an approximate application of the ND map is given as $\NDdiffMat\hat{\psi}_n$, or written by its transpose as $(\hat{\psi}_n^T\NDdiffMat^T)^T$. The transposed representation can be also written with a matrix $\Psi_n\in\R^{N\times N^2}$ for $\hat{\psi}_n$ and a vector $r\in\R^{N^2}$ for $\NDdiffMat$, defined as follows
\begin{equation}\label{eqn:vectorization}
\Psi_{n}=\left(
\begin{array}{ccc}
\hat{\psi}^T_{n} &  & 0 \\
 & \ddots &  \\
0 &  & \hat{\psi}^T_{n} 
\end{array} \right), \hspace{0.25cm} \text{ and }
r=\left(
\begin{array}{c}
r_1^T \\
\vdots \\
r_N^T 
\end{array} \right),
\end{equation}
where $r_1,\dots,r_N$ are the rows of $\NDdiffMat$. In the following we want to write the application of the ND map for all basis functions at once. 
For this purpose, let $\Psi\in\R^{N^2\times N^2}$ be the matrix consisting of all $\Psi_n$ for all $n\in \mathcal{N}$ stacked, and similarly $g\in\R^{N^2}$ consisting of all $\hat{g}_n$. Then the finite dimensional equivalent of \eqref{eqn:firstOptApp} can be written in matrix-vector notation with respect to $N$ basis functions as the minimization problem
\begin{equation}\label{eq:FidelityOnly}
\min_{r} \|\Psi r - g\|_2^2.
\end{equation}
As well known, a solution of \eqref{eq:FidelityOnly} can be found by solving the normal equation
\begin{equation}
\label{eq:NormalEquation}
\Psi^T\Psi r=\Psi^T g.
\end{equation}
We will prove next that in a special case of rotationally symmetric conductivities with certain restrictions, that this simple construction is already sufficient to uniquely determine the ND matrix for full boundary data as solution of \eqref{eq:NormalEquation}. The more general case will be discussed in Section \ref{sec:Approx2ND}. Let us first note that with the nonorthogonal projection as partial-boundary map one has under certain geometric assumptions that $\Psi$ is square with full rank. In our setting this is the case if the number of basis functions is strictly less than the number of electrodes, which are equally spaced and sized, and $\Omega$ is a disk.

\begin{lemma}\label{lem:reconRotSym}
Let $\sigma\in L^\infty(\Omega)$ be rotationally symmetric, that means $\sigma(z)=\sigma(|z|)$. Further, let $\Omega\subset\R^2$ be the unit disk and $\sigma \equiv 1$ close to the boundary, and assume $\Psi$ has full rank. Given the measurement $g_n=\NDdiff \psi_n$, then \eqref{eq:NormalEquation} has a unique solution that coincides with the full-boundary ND matrix of order $N$ with $N<M$, where $M$ is the number of electrodes.
\end{lemma}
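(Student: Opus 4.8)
The plan is to exploit that, for a rotationally symmetric conductivity on the disk, the ND map --- and therefore the difference map $\NDdiff=\ND_\sigma-\ND_1$ --- is diagonalized by the trigonometric basis $\{\varphi_n\}_{n\in\mathcal{N}}$. Once this is established, the true full-boundary ND matrix already solves the linear system $\Psi r=g$ underlying the normal equation \eqref{eq:NormalEquation}, and uniqueness of the solution follows at once from the assumed full rank of $\Psi$.

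First I would establish the diagonality. Since $\Omega$ is the unit disk and $\sigma(z)=\sigma(|z|)$, separation of variables applies to \eqref{eqn:introCondEq}: for $u=f(r)e^{in\theta}$ the conductivity equation reduces to the radial ODE $\tfrac1r(r\sigma f')'-\tfrac{n^2}{r^2}\sigma f=0$, which has a one-dimensional space of solutions regular at the origin. As $\sigma\equiv1$ near $\bndry$, this regular solution equals $a r^{|n|}+b r^{-|n|}$ for $r$ close to $1$, so both its Dirichlet trace $u|_{\bndry}$ and its co-normal trace $\sigma\dnu u|_{\bndry}$ are scalar multiples of $e^{in\theta}$; hence each of $\cos n\theta$, $\sin n\theta$ is mapped by $\ND_\sigma$ to a multiple of itself, with eigenvalue depending only on $|n|$. (Abstractly: rotational invariance of the equation forces $\ND_\sigma$ to commute with all rotations of $\bndry\cong S^1$, hence to be a Fourier multiplier, and self-adjointness of $\ND_\sigma$ together with the reflection symmetry of $\sigma(|z|)$ gives diagonality on the real basis.) Applying this to both $\sigma$ and the constant reference conductivity, $\NDdiff$ is self-adjoint with $\NDdiff\varphi_n=\mu_n\varphi_n$ for some $\mu_n\in\R$, so by orthonormality the full-boundary ND matrix of order $N$ is the diagonal matrix $D:=\mathrm{diag}(\mu_n)_{n\in\mathcal{N}}$, because $(\NDdiffMat)_{n,j}=(\NDdiff\varphi_n,\varphi_j)_{L^2(\bndry)}=\mu_n\delta_{n,j}$.

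Next I would verify that $D$ solves the system and then invoke uniqueness. Using $g_n=\NDdiff Q\varphi_n=\NDdiff\psi_n$, the self-adjointness of $\NDdiff$ and $\NDdiff\varphi_j=\mu_j\varphi_j$ give
\[
(\hat g_n)_j=(\NDdiff\psi_n,\varphi_j)_{L^2(\bndry)}=(\psi_n,\NDdiff\varphi_j)_{L^2(\bndry)}=\mu_j(\psi_n,\varphi_j)_{L^2(\bndry)}=\mu_j(\hat\psi_n)_j
\]
for all $n,j\in\mathcal{N}$; since only the $j$-th Fourier coefficient of $\psi_n$ enters, the truncation to $N$ modes is consistent even though $\psi_n=Q\varphi_n$ carries content of order larger than $N$. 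Thus $D\hat\psi_n=\hat g_n$ for every $n\in\mathcal{N}$, which is exactly $\Psi r^\star=g$ for $r^\star$ the vectorization \eqref{eqn:vectorization} of $D$. Under the hypotheses $\Psi$ is square (this is where $N<M$ with equally spaced and sized electrodes on the disk enters) and has full rank, hence is invertible, so $r^\star=\Psi^{-1}g$ is the unique solution of $\Psi r=g$ and, a fortiori, of $\Psi^T\Psi r=\Psi^T g$. Its matrix form is $D=\NDdiffMat$, the full-boundary ND matrix of order $N$, which proves the claim.

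I expect the only genuinely delicate point to be the diagonalization in the second paragraph: making the separation-of-variables picture rigorous (in particular that the regular interior solution really produces boundary traces proportional to $e^{in\theta}$, and that the eigenvalues decay like $1/|n|$ so that $\ND_\sigma$ is bounded on $\Hhalf$, which is where $\sigma\equiv1$ near $\bndry$ is used), or equivalently justifying carefully that a rotation-invariant self-adjoint operator on $L^2(S^1)$ is diagonal on $\{\cos n\theta,\sin n\theta\}$ rather than merely block-diagonal on each frequency. Everything downstream --- that $D$ solves the normal equation and that this solution is unique --- is a one-line computation together with an appeal to the assumed invertibility of $\Psi$.
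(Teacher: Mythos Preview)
Your proposal is correct and follows essentially the same approach as the paper: establish that $\NDdiff$ is diagonal on the trigonometric basis, use self-adjointness to derive $(\hat g_n)_j=\mu_j(\hat\psi_n)_j$, and conclude from the full-rank (hence invertibility) assumption on the square matrix $\Psi$. The only difference is cosmetic: the paper simply cites the literature for the eigenfunction property of $\{\varphi_n\}$, whereas you sketch a separation-of-variables / rotation-invariance argument; your explicit remark that truncation to $N$ modes is harmless because only the $j$-th coefficient of $\psi_n$ enters is a nice clarification the paper leaves implicit.
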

\begin{proof}
We show that the unique solution of
\begin{equation}\label{eq:rotUnique}
\Psi r = g
\end{equation}
coincides with the ND matrix $\NDdiffMat$ if the conductivity is rotationally symmetric.
Let $n\in\mathcal{N}$ and we consider the measurement $g_n$, for which the coefficients are given by $(\hat{g}_n)_j=(\NDdiff \psi_n,\varphi_j)$.

The basis functions $\varphi_n$ are eigenfunctions of the ND maps $\ND_\sigma$ and $\ND_1$ \cite{Gisser1990,Mueller2012}, hence $\NDmat_\sigma$ and $\NDmat_1$ are both diagonal matrices. Then by linearity and self-adjointness we get
\begin{equation}\label{eq:coefRotSym}
(\hat{g}_n)_j=(\NDdiff\psi_n,\varphi_j)=(\psi_n,\NDdiff\varphi_j)=c_j(\psi_n,\varphi_j) = c_j (\hat{\psi}_n)_j,
\end{equation}
where $c_j$ is the eigenvalue of $\NDdiff$ corresponding to $\varphi_j$.
Thus, the coefficient of the measurement $(\hat{g}_n)_j$ is just the basis function $(\hat{\psi}_n)_j$ multiplied by the eigenvalue $c_j$ of $\NDdiff$. We note that due to the full rank assumption at least one $(\hat{\psi}_n)_j\neq 0$ for each $n\in\mathcal{N}$.

Let us now write $\NDdiffMat=\text{diag}(c_{-N/2},\dots,c_{N/2})$ as diagonal matrix and let $r=\NDdiffMat(:)$ be the vectorized form, inserting this with \eqref{eq:coefRotSym} into \eqref{eq:rotUnique} we see that $r$ is a solution.  By construction we have that $\Psi$ is square and due to the full rank assumption there exists a unique inverse, hence $r$ is uniquely defined. 
\end{proof}

This result shows that one could also just solve $\Psi r = g $. But with respect to the electrode extension operator $LP$ as well as numerical noise, this is not recommended. Further from the proof we can see that in fact for rotationally symmetric conductivities we need a lot less data.
\begin{corollary}
Under the assumptions of Lemma \ref{lem:reconRotSym} and additionally let $(\hat{\psi}_n)_j\neq 0$ for all $j\in\mathcal{N}$, then for one fixed $n\in\mathcal{N}$ we can recover the ND matrix $\NDdiffMat$ of order $N$ from one measurement $g_n=\NDdiff \psi_n$.
\end{corollary}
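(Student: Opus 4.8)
Proof proposal. The plan is to isolate the single scalar relation already established inside the proof of Lemma \ref{lem:reconRotSym} and invert it coordinatewise. Since $\sigma$ is rotationally symmetric and $\sigma\equiv 1$ near $\bndry$, the basis functions $\varphi_j$ are eigenfunctions of both $\ND_\sigma$ and $\ND_1$, hence of the difference map $\NDdiff$; denote by $c_j$ the eigenvalue of $\NDdiff$ attached to $\varphi_j$, so that the order-$N$ matrix $\NDdiffMat$ is precisely the diagonal matrix $\text{diag}(c_{-N/2},\dots,c_{N/2})$. Consequently, knowing the numbers $c_j$ for all $j\in\mathcal{N}$ determines $\NDdiffMat$ completely, and the task reduces to recovering those $N$ scalars.

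Next I would recall the coefficient identity \eqref{eq:coefRotSym}, namely $(\hat g_n)_j = c_j(\hat\psi_n)_j$, valid for every $n\in\mathcal{N}$ by linearity and self-adjointness of $\NDdiff$ together with its diagonal form. Now fix the single index $n$ supplied by the hypothesis. The data $g_n=\NDdiff\psi_n$ give us all coefficients $(\hat g_n)_j=(g_n,\varphi_j)$, while the coefficients $(\hat\psi_n)_j=(Q\varphi_n,\varphi_j)$ are known since the partial-boundary map $Q$ and the basis are known. By the added assumption $(\hat\psi_n)_j\neq 0$ for every $j\in\mathcal{N}$, each of the $N$ scalar equations $(\hat g_n)_j = c_j(\hat\psi_n)_j$ can be solved individually, yielding $c_j = (\hat g_n)_j/(\hat\psi_n)_j$. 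Assembling these into the diagonal matrix recovers $\NDdiffMat$ from the single measurement $g_n$, which is exactly the assertion.

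There is essentially no obstacle beyond bookkeeping already carried out for Lemma \ref{lem:reconRotSym}: the one point worth flagging is that the strengthened nonvanishing condition $(\hat\psi_n)_j\neq 0$ \emph{for all} $j$ — rather than merely for some $j$ as in the lemma — is what replaces the full-rank hypothesis on $\Psi$ and decouples the $N$ equations produced by a single $n$, making them individually invertible. I would also note, without needing it for the proof, that on the disk with equispaced electrodes the coefficients $(\hat\psi_n)_j=(Q\varphi_n,\varphi_j)$ can be written explicitly, so this stronger hypothesis is the generic situation rather than a pathological restriction.
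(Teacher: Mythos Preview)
Your proposal is correct and follows essentially the same route as the paper: both use the coefficient identity $(\hat g_n)_j=c_j(\hat\psi_n)_j$ from the proof of Lemma~\ref{lem:reconRotSym} and invert it componentwise under the nonvanishing assumption to recover the diagonal entries of $\NDdiffMat$. The paper phrases this as solving the diagonal system $\Psi r=g$ with $\Psi=\mathrm{diag}((\hat\psi_n)_{-N/2},\dots,(\hat\psi_n)_{N/2})$, which is exactly your division step in matrix form.
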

\begin{proof}
Given the measurement coefficients $(\hat{g}_n)_j= c_j (\hat{\psi}_n)_j,$ and
\[
\Psi=\left(
\begin{array}{ccc}
(\hat{\psi}_{n})_{-N/2} & \cdots & 0 \\
 & \ddots &  \\
0 & \cdots & (\hat{\psi}_n)_{N/2} 
\end{array} \right),
\]
then $r=(c_{-N/2},\dots,c_{N/2})$ solves \eqref{eq:rotUnique} and  $\NDdiffMat=\text{diag}(r).$
\end{proof}

\begin{figure}[t!]
\centering
\begin{picture}(320,150)
\put(-30,-10){\includegraphics[width=200pt]{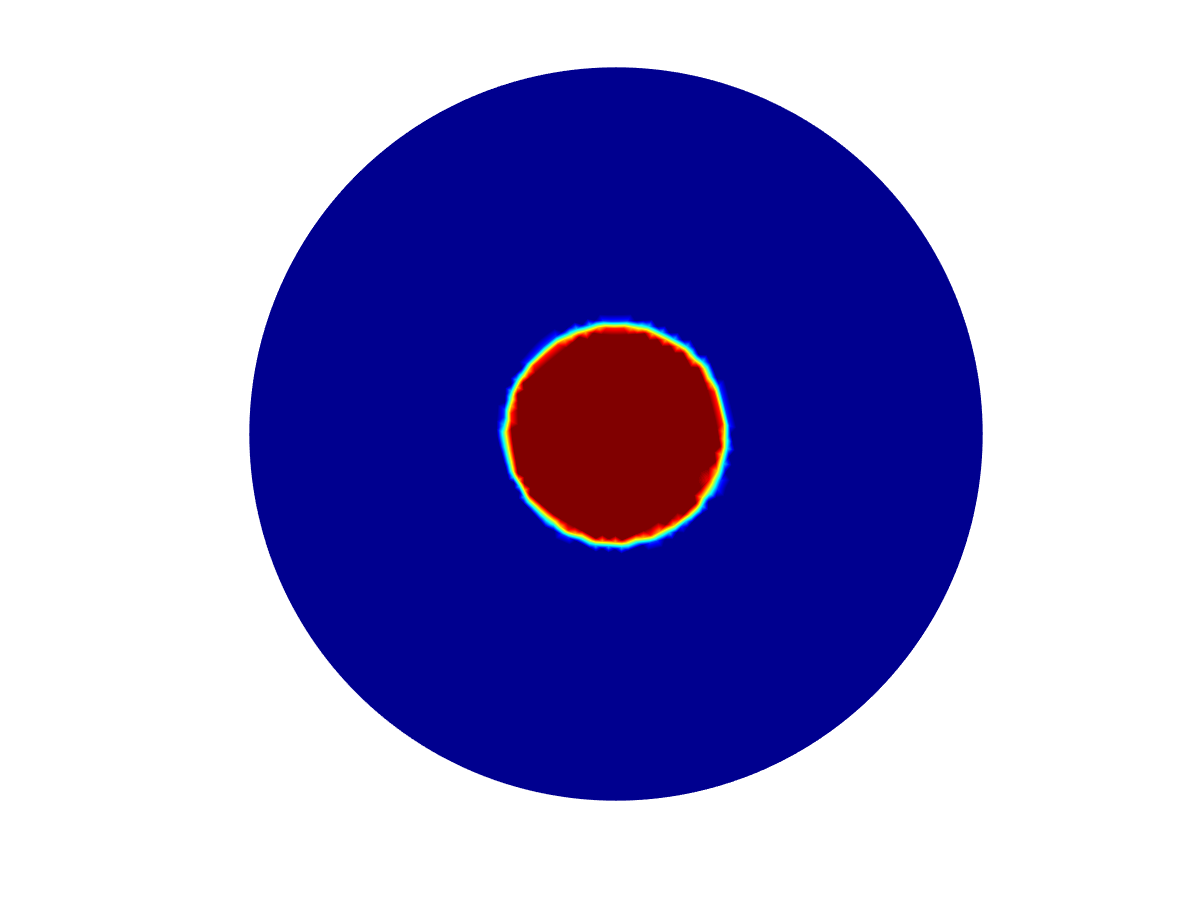}}
\put(140,-10){\includegraphics[width=200pt]{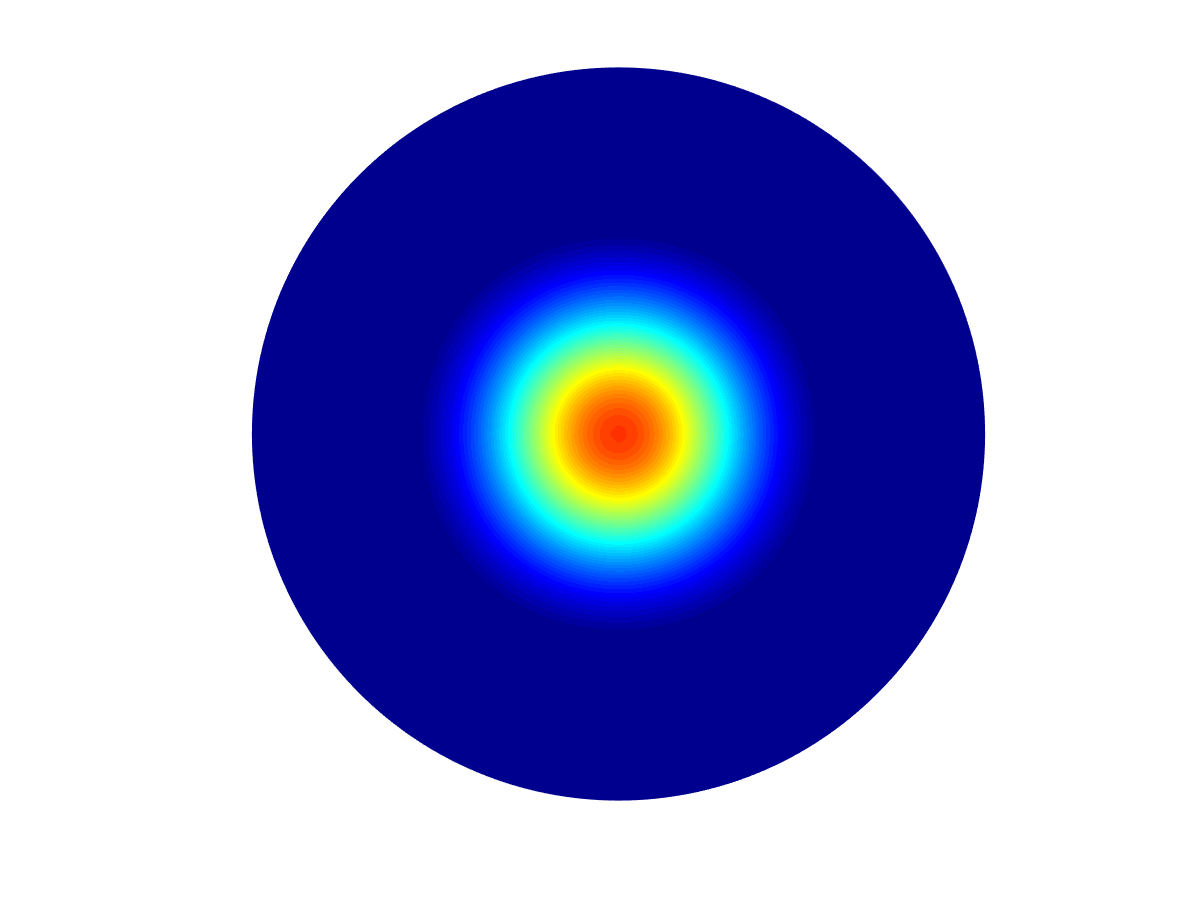}}
\put(50,140){\textcolor{black}{Phantom}}
\put(210,140){\textcolor{black}{Reconstruction}}

\end{picture}
\caption{\label{fig:rotSym} Illustration of Corollary 3.2: Reconstruction of a rotationally symmetric conductivity with the D-bar algorithm from a single measurement $g_2=\NDdiff \psi_2$, where $\psi_2=Q\varphi_2$ and $\varphi_2(\theta)=\cos(2\theta)$.}
\end{figure} 

The purpose of Lemma \ref{lem:reconRotSym} is to illustrate that the reduction to a minimization problem is a sensible approach. We note that the additional assumption is easily satisfied in the partial-boundary setting. Furthermore, it is important to note that for more general conductivities it is not sufficient any more to just solve the normal equation \eqref{eq:NormalEquation}. Furthermore, if we have only electrode measurements available we can not expect to find a unique solution anymore, due to the discrepancy of the ECM as described in Theorem \ref{theo:approx} and especially with additional measurement noise in mind. Thus, we will discuss in the following section how to extend this approach for general conductivities and data acquired from electrodes.

\subsection{Finding an approximation to the ND map}\label{sec:Approx2ND}
In the following we propose an algorithm that is capable to compute an approximation to the ND matrix $\NDdiffMat$  from electrode input. We start by discussing how to compute an approximation to continuum data from the electrode measurements and then use this acquired data in the framework of Section \ref{sec:OptProb} to obtain the approximated ND map.

\subsubsection{Approximating continuum data from electrode measurements} 
The first task is to recover the continuum data $g_n=\NDdiff\psi_n$ from possibly noisy electrode measurements modelled by $U^n=LPg_n$ for all $n\in\mathcal{N}$. Here real data or measurements from the CEM are considered as noisy input.
We note that the adjoint of $LP$ in $L^2(\bndry)$ is given by the nonorthogonal electrode projection $Q$. That means we have
\[
(LP\varphi,\psi)_{L^2(\bndry)}=(\varphi,Q\psi)_{L^2(\bndry)}, \text{ for } \varphi,\psi \in L^2(\bndry).
\]
This fact will be especially helpful to recover a suitable $g'$ such that $LPg' \approx U^n$. It is easy to see, that the solution of $LPg'=U^n$ is not unique, hence we need some further information. For this we make use of the fact that difference data is smooth, as discussed for instance in \cite{Hanke2011}. Thus, it is reasonable to search for $g'$ as the minimizer of a Tikhonov functional 
\begin{equation}\label{eq:tikhonov4trace}
\|LPg'-U^n\|^2_{L^2(\bndry)} + \alpha \| g' \|^2_{L^2(\bndry)}.
\end{equation}
The minimizer is given as the solution of the regularized normal equation
\begin{equation}\label{eq:normalEq4trace}
(QLP+\alpha I)g'=QU^n.
\end{equation}
Numerically the solution can be computed matrix free, for instance by the conjugate gradient method, if one has $Q$ and $LP$ as functions implemented. The parameter $\alpha>0$ controls the smoothness of the solution and can be adjusted by a priori knowledge of the measured object. An example result of this procedure is illustrated in Figure \ref{fig:approxContinuumTrace}.

\begin{figure}[t!]
\centering
\begin{picture}(320,250)
\put(-10,135){\includegraphics[width=150pt]{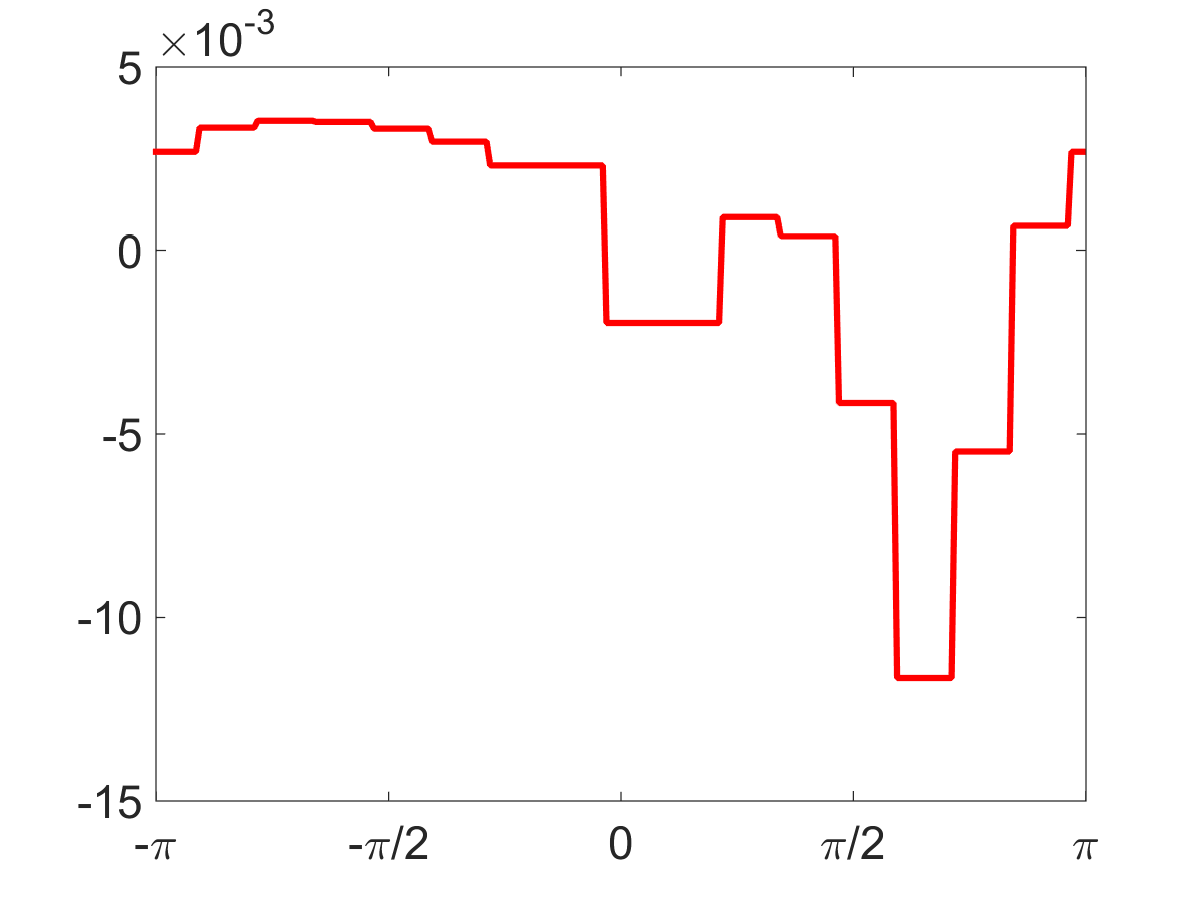}}
\put(160,135){\includegraphics[width=150pt]{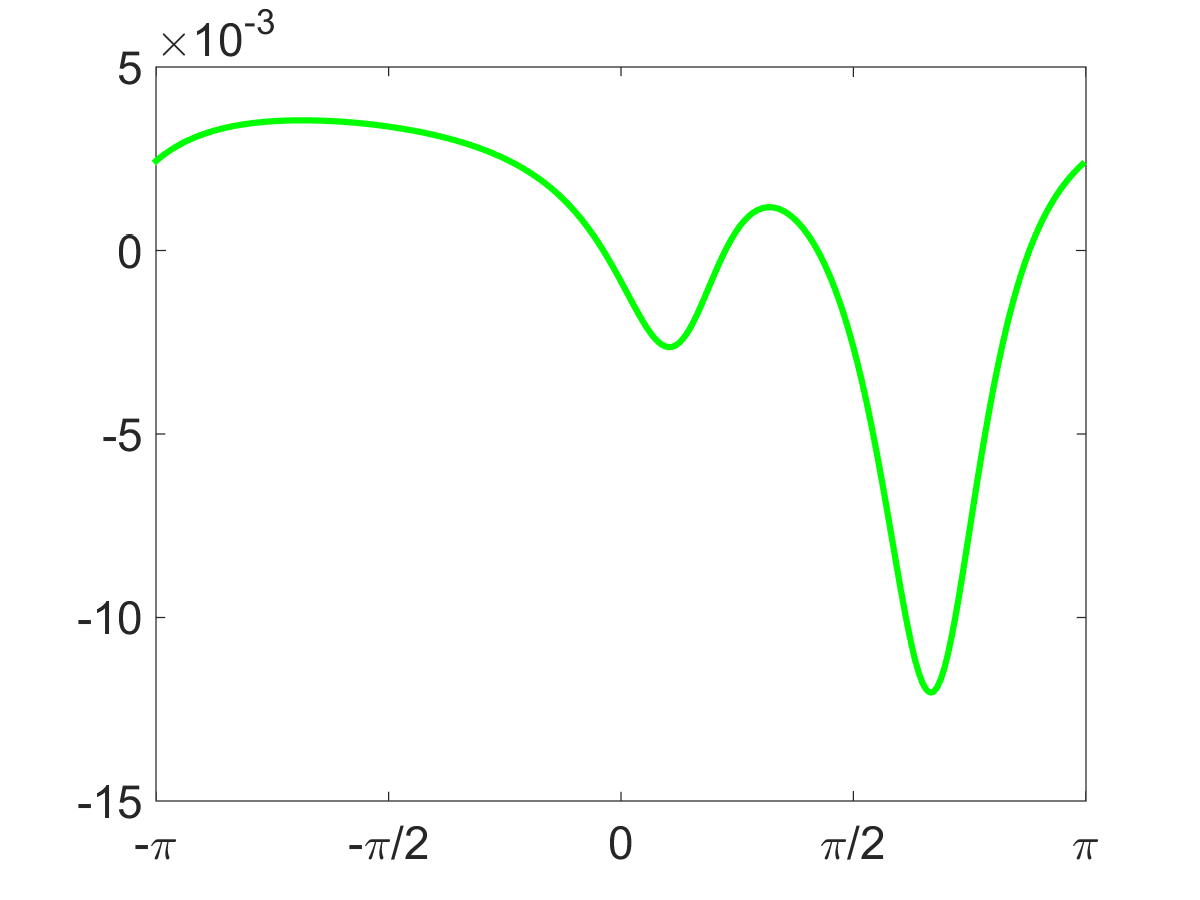}}
\put(-10,10){\includegraphics[width=150pt]{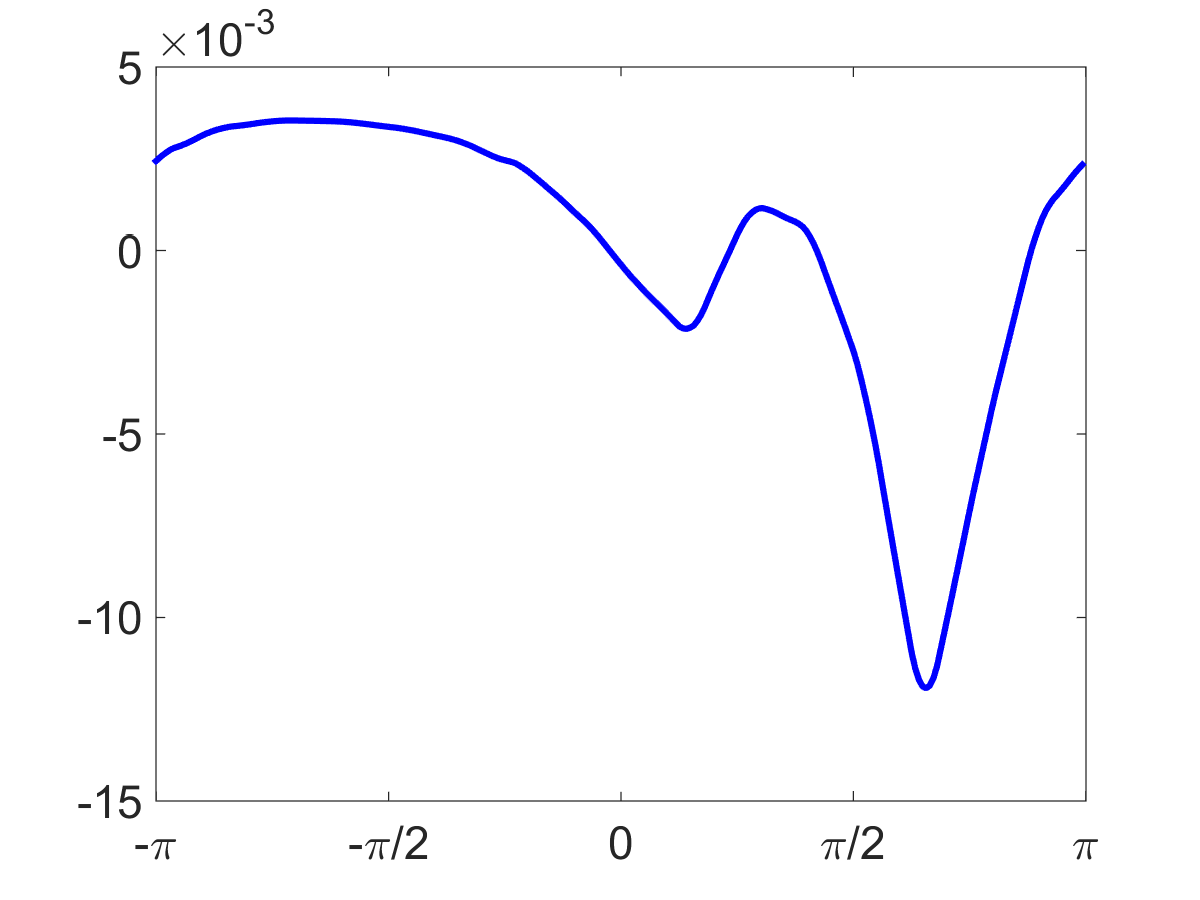}}
\put(160,10){\includegraphics[width=150pt]{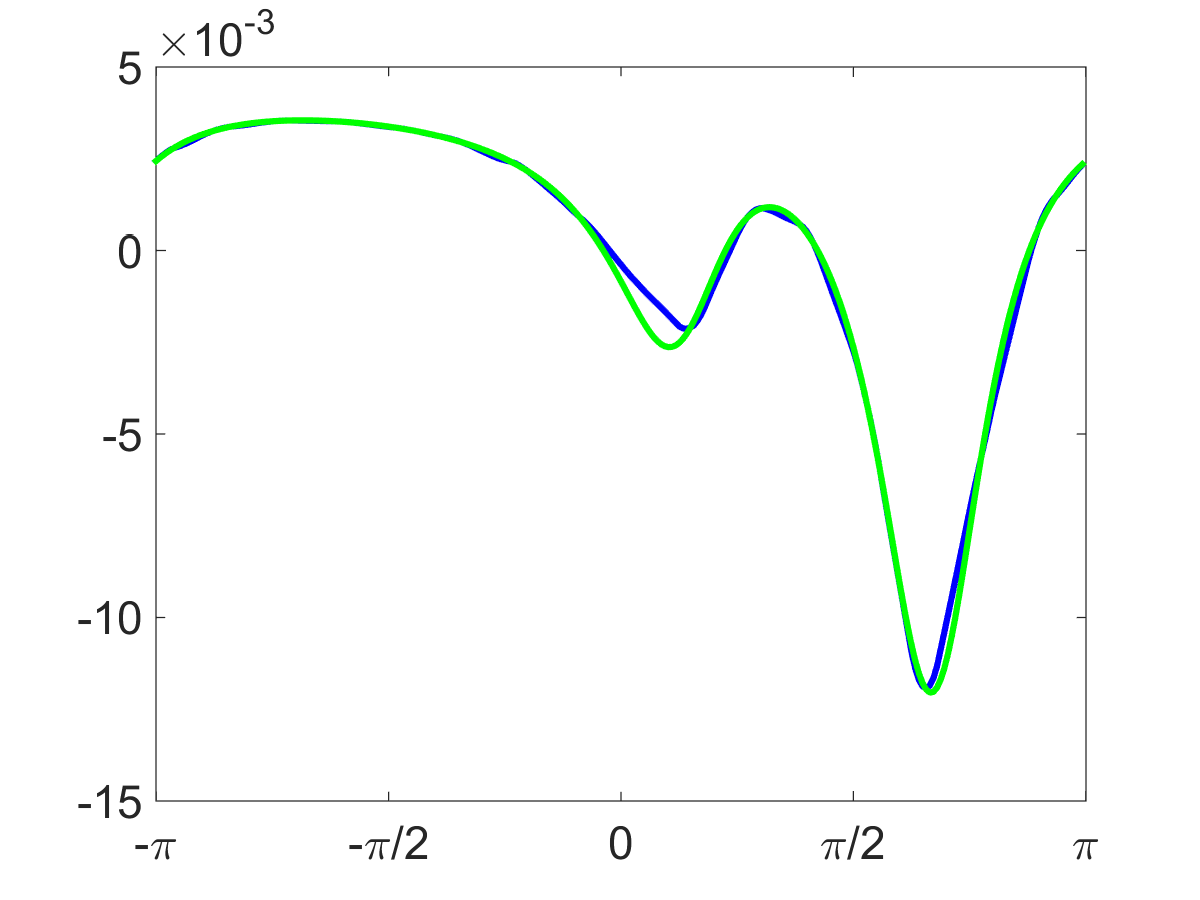}}

\put(2,250){\textcolor{black}{Electrode measurement: $U^n$}}
\put(195,250){\textcolor{black}{Continuum data: $g'$}}
\put(210,125){\textcolor{black}{Comparison}}
\put(-2,125){\textcolor{black}{Approximate continuum data}}
\end{picture}
\caption{\label{fig:approxContinuumTrace}  Illustration of the approximation procedure to obtain continuum data. The top shows electrode data (top left) and the corresponding idealized continuum data (top right). The approximation (bottom left) is computed by solving \eqref{eq:normalEq4trace} and a comparison to continuum data is shown (bottom right).}
\end{figure} 

\subsubsection{Computing an approximate ND map}
In practice, for $M$ active electrodes we can only achieve $M-1$ degrees of freedom for a linearly independent basis. Depending on the partial-boundary map even less are possible. It is not in all cases advisable to restrict oneself to just $M-1$ degrees of freedom. If more basis functions are used one has an underdetermined system that is not uniquely solvable anymore, hence we need to impose constraints to the minimization problem \eqref{eq:FidelityOnly} to favor solutions of a certain kind. Also with respect to approximated continuum data and possibly noisy measurement, we need to impose some regularity on the solution.
We propose to use as constraint a simple 2-norm distance to the measured partial ND map, in vectorized form $r^\Gamma$ of $\pNDdiff$ as in \eqref{eqn:vectorization}, and combined with a weight matrix $S$. That is, we search for a solution of
\begin{equation}\label{eqn:oneStepOpt}
\min_{r} \|\Psi r - g\|_2^2 + \beta\|r-Sr^\Gamma\|_2^2.
\end{equation}
The solutions can be computed as the unique solution of
\[
r=(\Psi^T\Psi + \beta\ I)^{-1}(\Psi^T g+\beta S r^\Gamma).
\]
This leads to the difficulty of choosing the regularization parameter $\beta>0$ adequately as well as the weight matrix $S$. The matrix $S$ can be chosen with knowledge about the measured object and by that the expected structure of $\NDdiffMat$. 
As it will be shown in the computational section, this simple approach is already sufficient to demonstrate the effectiveness of the methodology.

\subsubsection{The resulting algorithm for electrode data}\label{sec:Algo}
Let us summarize at this point the individual steps to obtain a matrix approximation to the ND map from partial-boundary electrode measurements. Given any current input produced by a partial-boundary map, e.g. the nonorthogonal projection $Q$, then the resulting voltage measurements are modelled by $U^n=LP\NDdiff^\Gamma\varphi_n$. Measurements from the CEM or real data is treated as noisy input.
We aim to recover an approximation to the (full-boundary) ND map $\NDdiffMat$ by the following procedure:
\\
\begin{itemize}
\item[ 1.] Measure voltages $U^n$ for all input currents $Q\varphi_n$.
\item[ 2.] Approximate continuum traces $g_n$ from the measured voltages by \eqref{eq:tikhonov4trace}.
\item[ 3.] Find approximation to full-boundary ND matrix $\NDdiffMat$ by solving \eqref{eqn:oneStepOpt} \\
\end{itemize}
Now that the algorithm to obtain approximate ND maps has been stated, we will test the effectiveness of this approach on simulated noisy data and real data in the next section.

\section{Computational results}\label{sec:CompResults}
The essential question in electrical impedance tomography is at the end, how valuable are the reconstructions from the acquired data. Following this, we evaluate the approximate ND matrix by the reconstruction we can obtain from it, additionally we present an error table in Section \ref{sec:discus}. We will summarize very briefly the D-bar algorithm used for the reconstruction, it is specifically formulated for ND maps and was recently introduced in \cite{Hauptmann2017}. The strength of D-bar algorithms lies in their direct nature, that means there is just one unique solution that converges to the real conductivity if the noise goes to zero. In other words, the D-bar method is a regularization strategy (for full-boundary data) \cite{Knudsen2009}. 

\subsection{The D-bar algorithm for ND maps}
The classical D-bar method \cite{Knudsen2006,Knudsen2009} is formulated for full-boundary Dirichlet-to-Neumann maps and is based on the results in \cite{Nachman1996,Novikov1988}. A first implementation was done in \cite{Siltanen2000} and further developments propsed by \cite{DeAngelo2010,Isaacson2004, Mueller2003}.
The algorithm we use is formulated for ND maps and works also for partial-boundary data \cite{Hauptmann2017}, it can be summarized by two essential steps. At first we compute an approximate scattering transform $\T^{\mathrm{ND}}(k)$ for $k\in\C$ from the ND map by evaluating the following integral 
\begin{equation}\label{eq:scatND}
\T^{\mathrm{ND}}(k) = \int_{\bndry} \left(\dnu e^{i\bar{k}\bar{\zeta}}\right)(-\NDdiff\dnu e^{ik\zeta})ds(\zeta).
\end{equation}
If the domain is chosen to be the unit disk, this can be simply evaluated as
\begin{align*}
\T^{\mathrm{ND}}(k) &= \int_{\bndry} i\bar{k}\bar{\zeta} e^{i\bar{k}\bar{\zeta}}(-\NDdiff\dnu ik\zeta e^{ik\zeta})ds(\zeta) \\
&= |k|^2\int_{\bndry}  \bar{\zeta} e^{i\bar{k}\bar{\zeta}}(\NDdiff) \zeta e^{ik\zeta}ds(\zeta).
\end{align*}

In practice we can not compute the scattering transform for all values of $k$. Furthermore, noise in the measurement data and inaccuracies in the model lead to a blow-up of the scattering data for large $|k|$ frequencies. Due to these restrictions it is common to restrict the computations to a disk of radius $R>0$. Additionally the partial-boundary introduces further instabilities to the data, hence we also enforce a cut-off $C_t$ for high amplitudes. Altogether we compute the scattering data as
\begin{equation}\label{eqn:scatOnR}
\T^{\mathrm{ND}}_{R}(k)=
\begin{cases}
\T^{\mathrm{ND}}(k) &\text{ if } 0<|k|< R\text{, and }\left|\text{Re}\left(\T^{\mathrm{ND}}\right)\right|,\;\left|\text{Im}\left(\T^{\mathrm{ND}}\right)\right|\leq C_t \\
0 &\mbox{ else. }
\end{cases}
\end{equation}

The second step is then to solve the D-bar equation with the obtained approximate scattering transform $\T^{\mathrm{ND}}_{R}$. This can be done independently for each $z\in\Omega$ by solving
\begin{equation}\label{eq:dbark_eq}
\dbar_k \mu_R(z,k) = \frac{1}{4\pi \bar{k}} \T^{\mathrm{ND}}_R(k)e_{-k}(z) \overline{\mu_R(z,k)},
\end{equation} 
with the unitary exponential $e_k(z)=e^{i(kz+\bar{k}\bar{z})}$.
In practice the D-bar equation \eqref{eq:dbark_eq} is solved via the integral equation
\begin{equation} \label{eq:dbark_eq_int}
\mu_R(z,k) = 1+\frac{1}{(2\pi)^2}\int_{|k|\leq R}\frac{\T^{\mathrm{ND}}_R(k^\prime)}{\overline{k^\prime}(k-k^\prime)}\;e_{-k^\prime}(z)\overline{\mu_R(z,k^\prime)}dk_1^\prime dk_2^\prime,
\end{equation}
and at the end the conductivity is computed by simply evaluating
\[
\sigma_R(z)=\mu_R(z,0)^2.
\]
The computation of $\sigma(z)$ for $z\in\Omega$ by \eqref{eq:dbark_eq_int} is independent of the mesh and parallelizable.

\subsection{Discussion of approximation error}
Let us shortly examine the approximation error of the two continuum models (CM and ECM) to electrode measurements from the CEM. By \cite[Theorem 4.1]{Hyvoenen2009}, we have that the approximation error of the CM to CEM measurements depends linearly on the largest diameter of the extended electrodes. By our result Theorem \ref{theo:approx}, the approximation error of the ECM to CEM measurments depends linearly on the length of the electrodes and not on the distance between the electrodes. In particular this means uncovered parts of the boundary do not influence the approximation error. 

\begin{table}[!h]
\centering 
\caption{Approximation error of ND maps from ECM and CM data compared to CEM measurements. The reference in the left column is done with 16 equally sized and spaced electrodes. For the following cases electrodes have been removed from the setup.}
\label{table:approxError}
\begin{tabular}{l|ccccccc}

$\ell^2$ errors & 16 elec. &  14 elec. & 12 elec. & 10 elec. & 8 elec. \\
\hline
CM data  & 0.0171  &  0.0601  &  0.0730   & 0.0770   & 0.0827 \\
\hline
ECM data  & 0.0084 &   0.0097 &   0.0077  &  0.0087  &  0.0156 \\
\hline
\end{tabular}
\end{table}

We aim to verify this error numerically. For that purpose we have simulated data with the complete electrode model, the continuum model, and the electrode continuum model. The simulations are all done on the same mesh and the resulting ND matrices are computed with 16 basis functions. For the CEM and ECM we have started with 16 equally sized and spaced electrodes covering the boundary of the unit disk. We have then removed 2 neighboring electrodes and kept the remaining electrodes at their initial position. We have repeated this until only 8 electrodes were left. The errors of the recorded ND matrices can be seen in Table \ref{table:approxError}. The difference between CM and CEM data gets gradually worse as the gap increases, whereas the error between ECM and CEM data stays rather constant. There is a peak for only 8 left electrodes, this might be due to a decrease of the constant in \eqref{eq:elecSizeCond} and hence an increase of the constant in the estimate of Theorem \ref{theo:approx}. It is notable that even for 16 electrodes the ECM data is closer to CEM data, compared to classical continuum data.

\subsection{Simulated data for trigonometric current patterns}\label{sec:compSim}
The first example is a phantom with two circles with conductivity 2, where one circle is positioned close to the missing boundary. We compare continuum full-boundary data to noisy measurements from the \emph{electrode continuum model} \eqref{eqn:ECM}. The basis functions are chosen as
\[
\varphi_n(\theta)=\left\{
\begin{array}{cl}
\frac{1}{\sqrt{\pi}}\sin(n\theta) &\text{ if } n<0,\\
\frac{1}{\sqrt{\pi}}\cos(n\theta) &\text{ if } n>0,
\end{array}\right.
\]
with $n\in\mathcal{N}=\{-8,\dots -1,1,\dots,8\}$. 
The geometry is chosen to be the unit disk and the data is simulated on a very fine uniform FEM mesh with $16384$ elements. Since the reconstruction algorithm is mesh independent we chose this fine mesh for all following reconstructions. For the partial boundary case we use 12 out of 16 electrodes, which are equally spaced with length $h=2\pi/32=0.1963$.
The partial-boundary current is produced by the nonorthogonal projection $Q$. The resulting partial ND matrix $\pNDdiffMat$ then has rank 11, whereas the ND matrix $\NDdiffMat$ has full rank 16. Additionally, we added 0.1\% of relative noise to each measured set of voltages. From the collected data, we then computed an approximation to the full ND matrix with the proposed algorithm in Section \ref{sec:Algo}. The resulting matrix has rank 15, due to the restriction of partial basis functions and 16 electrodes. The parameter in the optimization has been chosen as $\beta=0.06$, such that the error to the full ND map is minimized. The resulting reconstructions are shown in Figure \ref{fig:circPair_recons}, where the scattering transform is computed with $R=4$ and $C_t=25$.

\begin{figure}[t!]
\centering
\begin{picture}(350,300)
\put(71,0){\includegraphics[height=275 pt]{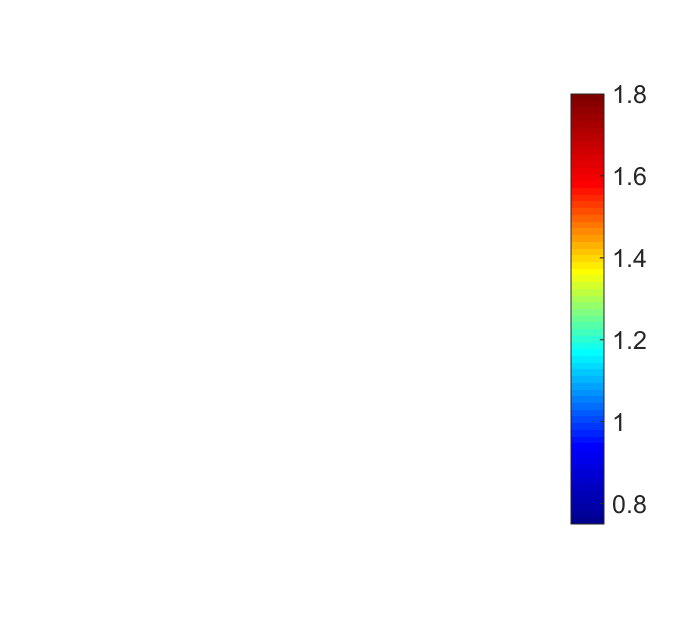}}
\put(-10,140){\includegraphics[width=175 pt]{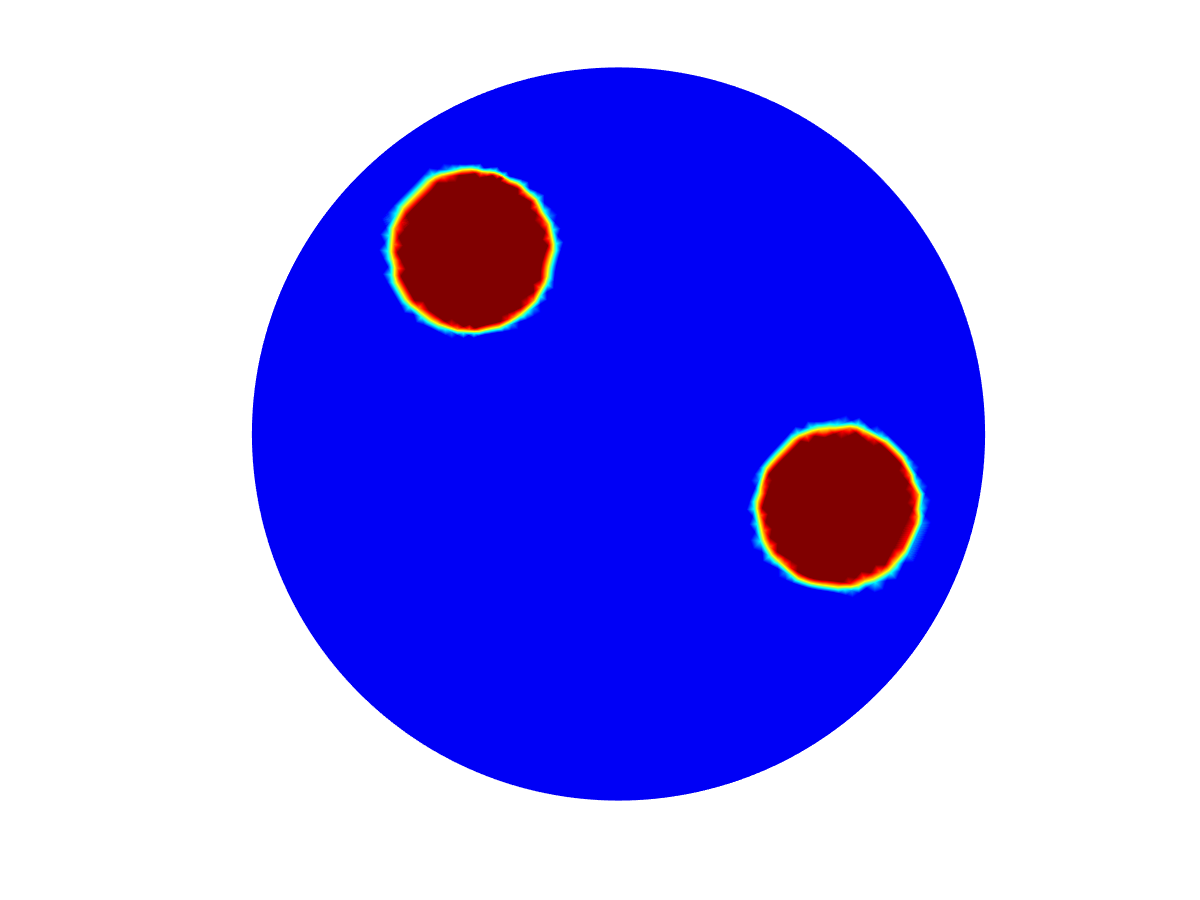}}
\put(140,140){\includegraphics[width=175 pt]{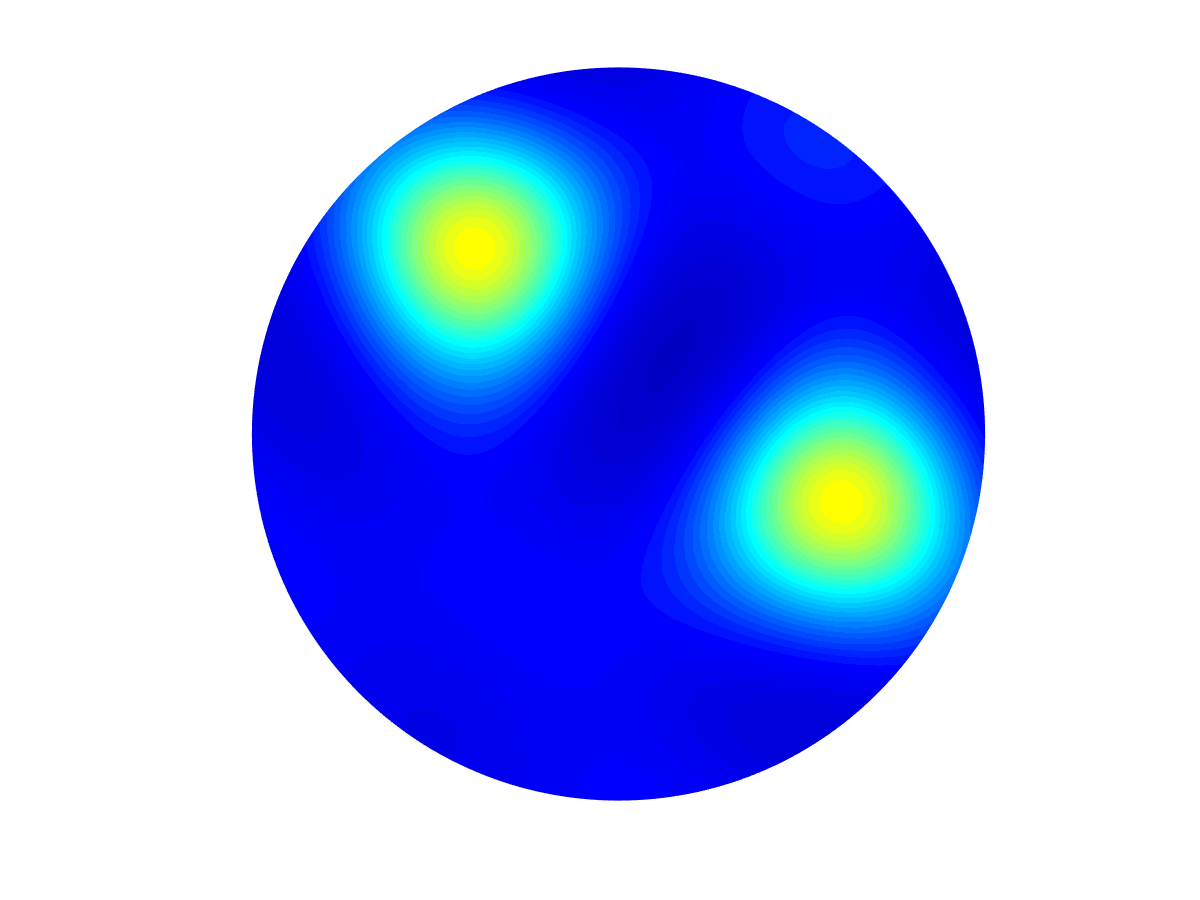}}
\put(-10,0){\includegraphics[width=175 pt]{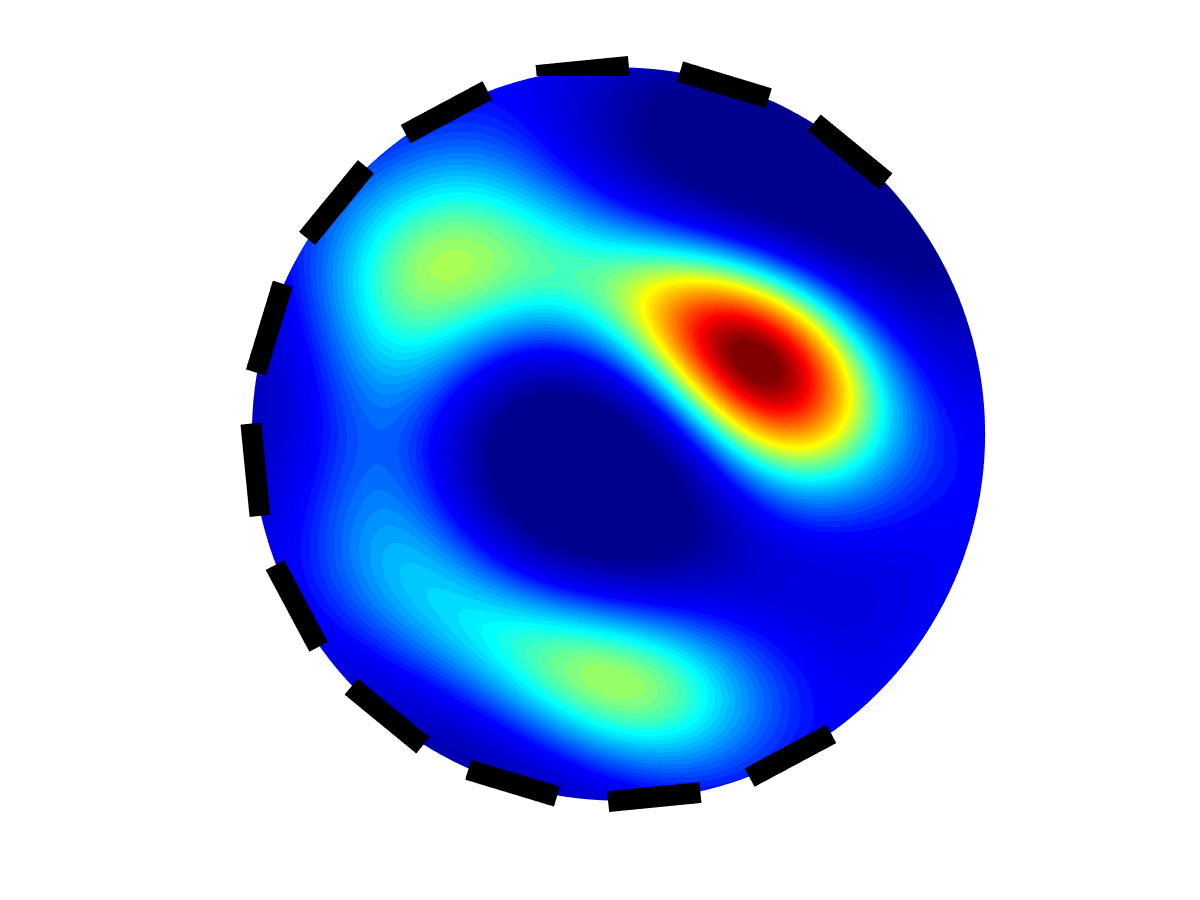}}
\put(140,0){\includegraphics[width=175 pt]{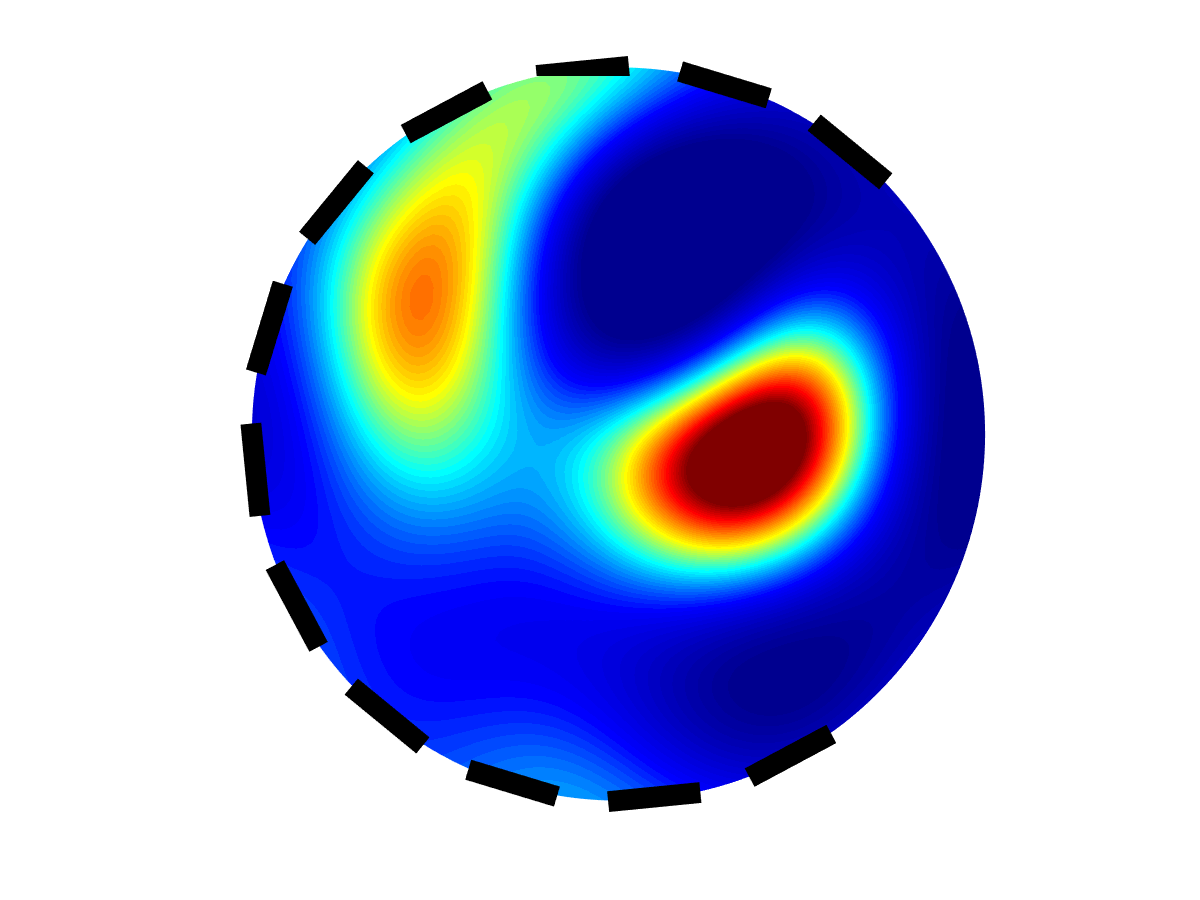}}
\put(38,135){\large{Partial-boundary}}
\put(45,125){\large{reconstruction}}
\put(180,135){\large{Approximated data}}
\put(195,125){\large{reconstruction}}
\put(55,270){\large{Phantom}}
\put(195,275){\large{Full-boundary}}
\put(195,265){\large{reconstruction}}

\end{picture}
\caption{\label{fig:circPair_recons} Reconstructions of the paired circle phantom in the continuum setting. The reference reconstruction (top right) has been computed from full-boundary continuum data without noise. The partial-boundary reconstruction (bottom left) is computed from ECM data with 12 electrodes on 75\% of the domain and additional 0.1\% relative noise. The improved reconstruction (bottom right) is obtained from the approximated ND matrix.}
\end{figure} 

The second test case uses a similar phantom with two circular inclusions with conductivity 2, but now we are using the \emph{complete electrode model} to produce more realistic data on 16 equally spaced electrodes with same size $h=2\pi/32$. The FEM mesh is finer close to the boundary and coarser in the inside of the domain, with a total of 1968 elements. The contact impedance is uniform on each electrode and set to $z=0.005$. The reference reconstruction uses all 16 electrodes with no noise, the partial electrode measurement uses 12 electrodes with 0.2\% additional relative noise. Following Theorem \ref{theo:approx} we interpret the measurement as from the ECM. By the proposed algorithm we compute an approximated ND matrix with $\beta=0.06$. The resulting reconstructions are shown in Figure \ref{fig:circPairCEM_recons}. The parameter for the scattering transform are $R=4$ and $C_t=25$.

\begin{figure}[t!]
\centering
\begin{picture}(350,300)
\put(71,0){\includegraphics[height=275 pt]{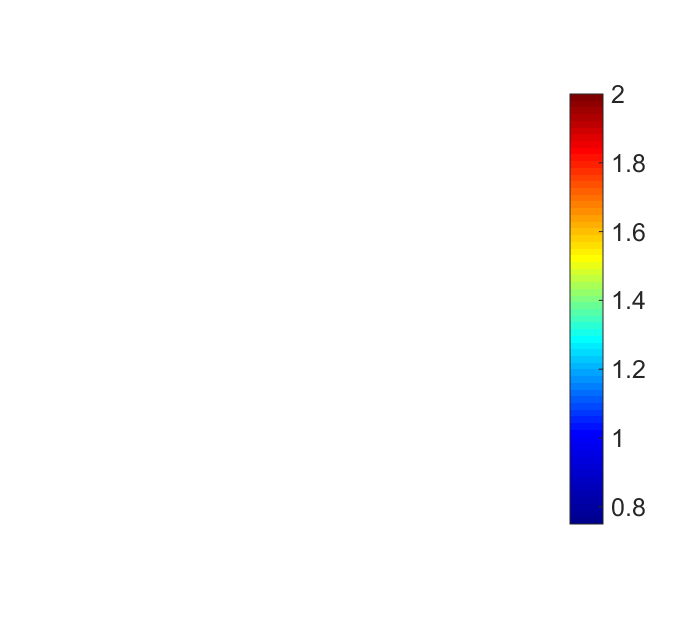}}
\put(-10,140){\includegraphics[width=175 pt]{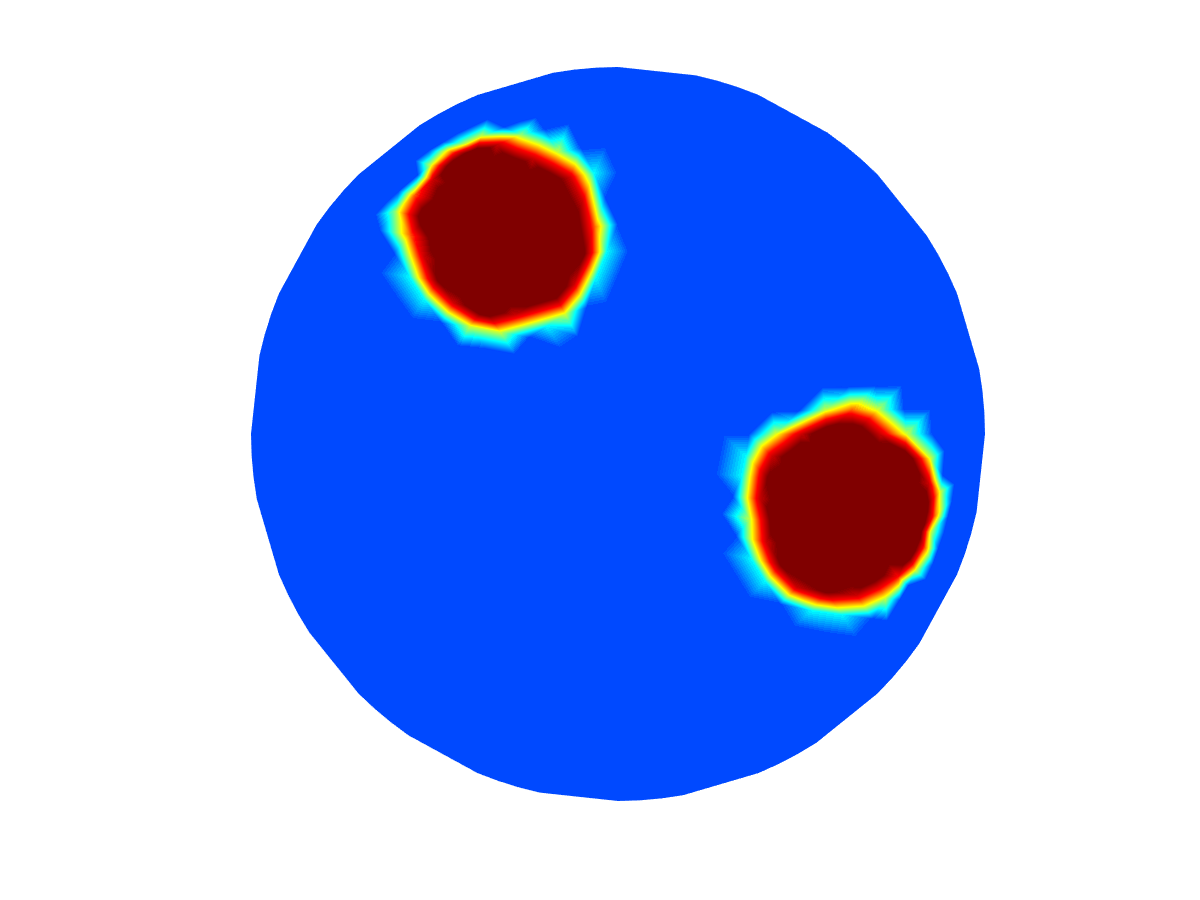}}
\put(140,140){\includegraphics[width=175 pt]{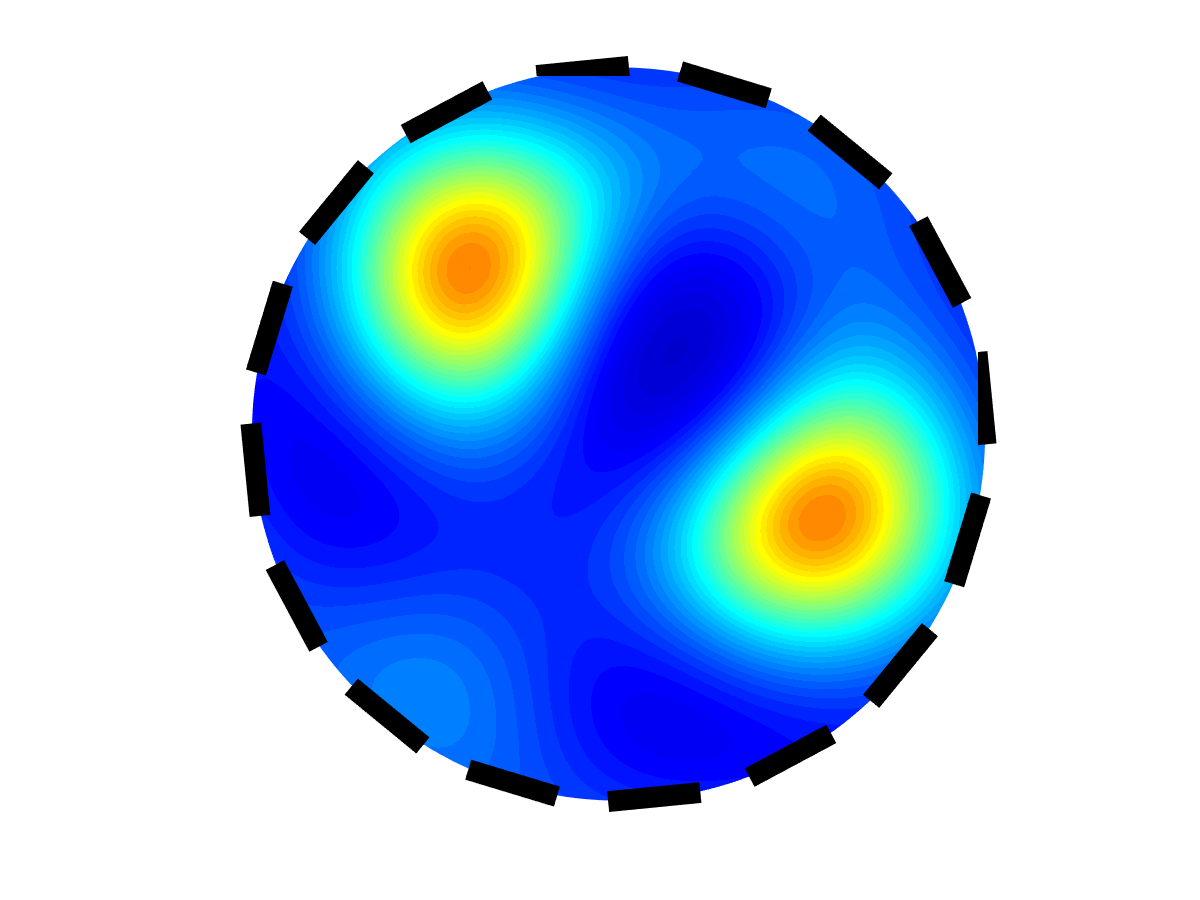}}
\put(-10,0){\includegraphics[width=175 pt]{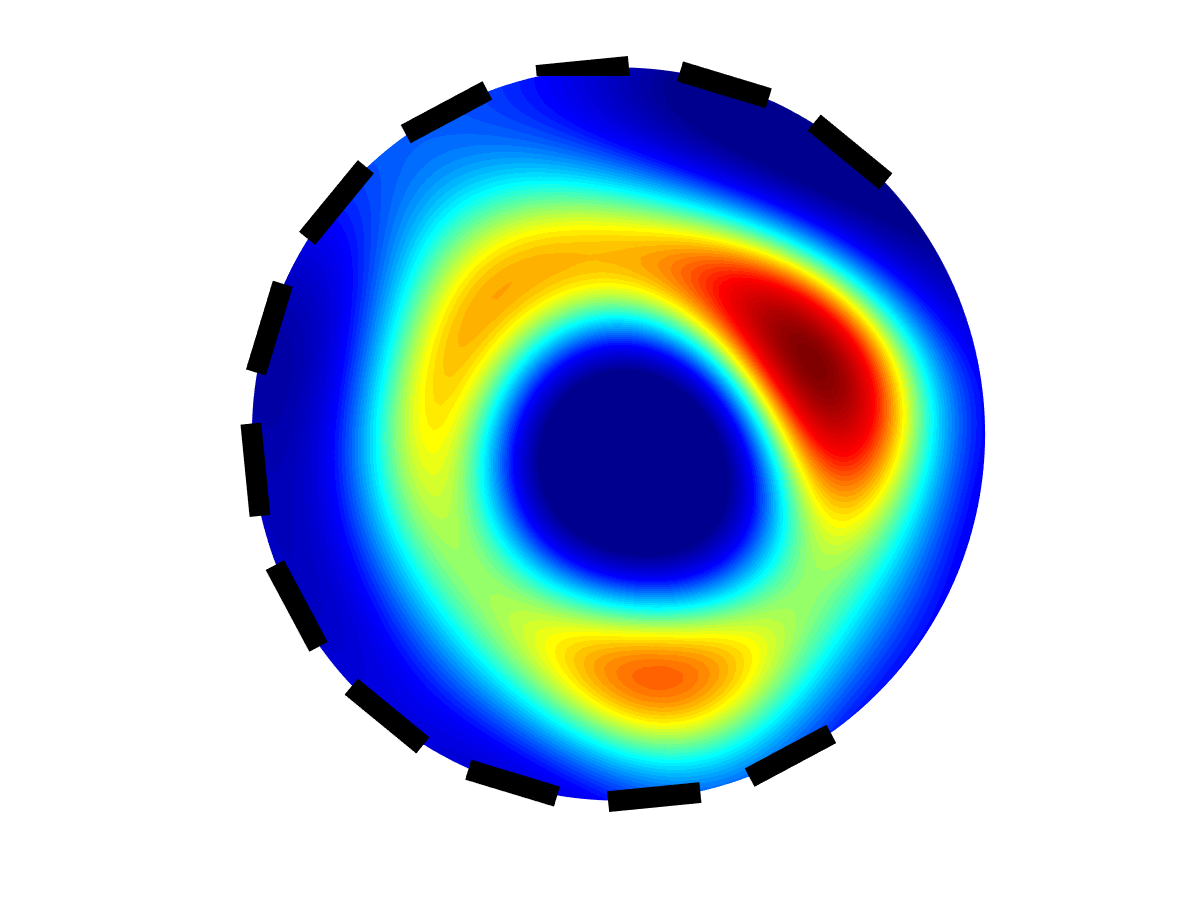}}
\put(140,0){\includegraphics[width=175 pt]{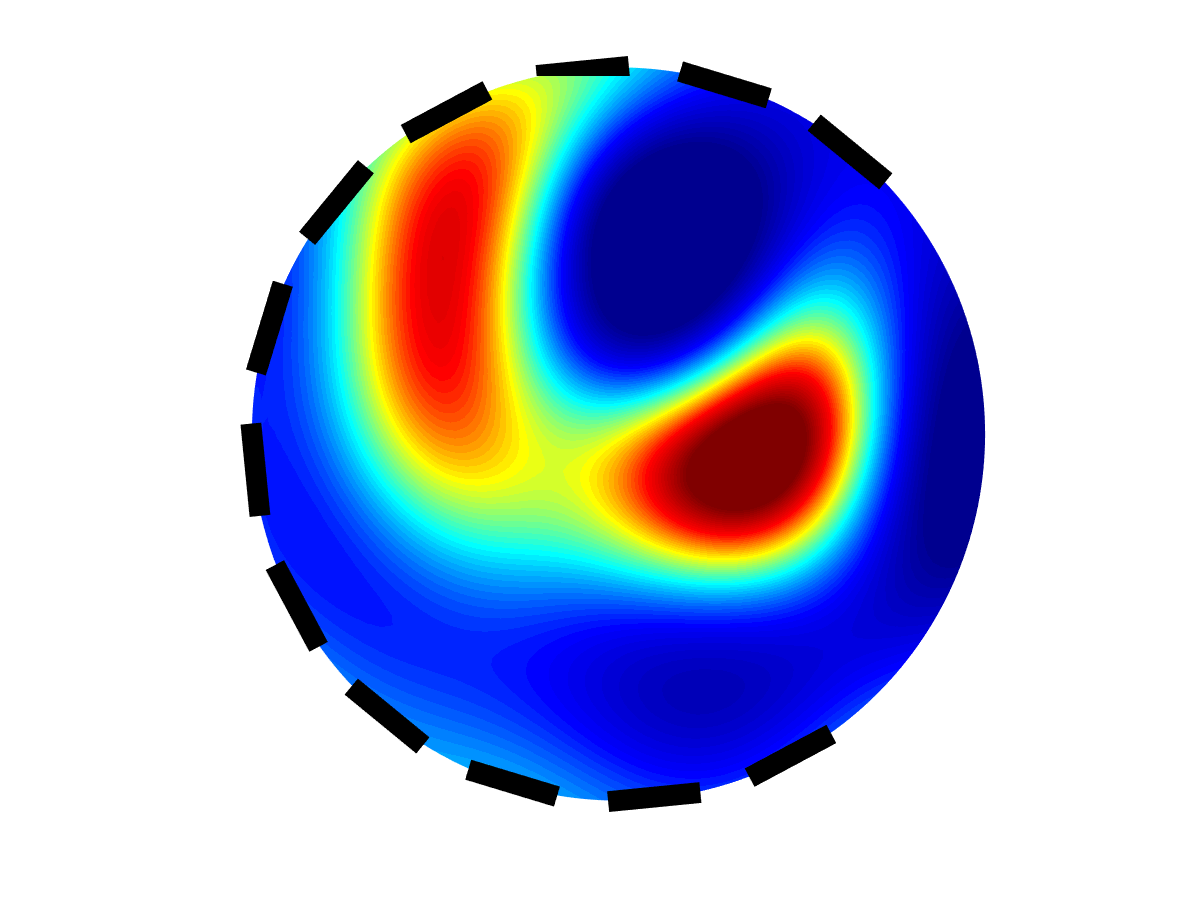}}
\put(38,135){\large{Partial-boundary}}
\put(45,125){\large{reconstruction}}
\put(180,135){\large{Approximated data}}
\put(195,125){\large{reconstruction}}
\put(55,270){\large{Phantom}}
\put(197,275){\large{Full-electrode}}
\put(195,265){\large{reconstruction}}

\end{picture}
\caption{\label{fig:circPairCEM_recons} Reconstructions of the paired circle phantom from CEM data. The reference reconstruction (top right) has been computed from 16 electrodes without additional noise. The partial-boundary reconstruction (bottom left) is computed from 12 electrodes on 75\% of the boundary and additional 0.2\% relative noise. The improved reconstruction (bottom right) is obtained from the approximated ND matrix}
\end{figure}

\subsection{Real measurement data for adjacent current pattern}
As final example we apply the methodology to real measurement data acquired with the KIT4 system in Kuopio at the University of Eastern Finland \cite{Hauptmann2017a,Kourunen2008}. The phantom consists of two high conductive metal rods (vertically translational ‎invariant) inserted to the tank filled with normal tap water. The tank has a diameter of 28 cm, the 16 metallic electrodes are 2.5 cm wide, 7 cm high, and equally spaced.
The background data has been taken with only water in the tank. 

The biggest difficulty for the proposed algorithm is that the system uses pairwise injection current patterns, which are not orthonormal and hence we first need to transform the measurement (by change of basis) to the trigonometric basis used to represent the ND maps. The used data was acquired with an adjacent current pattern from 16 electrodes. The partial data is taken with 10 active electrodes, which leaves only 9 of the original adjacent current patterns for the computations. The contact impedance is roughly $1.3\cdot 10^{-5}\Omega \text{cm}$.

Since the exact partial-boundary map is not known for the transformed adjacent current pattern, we computed the corresponding basis functions numerically from the data of a constant background. This can be done by transforming the obtained background measurement to the trigonometric basis and then normalizing to 1. Using this basis in the approximation algorithm with $\beta=0.5$, we were able to compute an approximated ND matrix from the partial data. The resulting reconstructions are displayed in Figure \ref{fig:circPairKIT_recons}. The parameter used to compute the scattering transform are $R=4$ and $C_t=25$.

\begin{figure}[t!]
\centering
\begin{picture}(350,300)
\put(71,0){\includegraphics[height=275 pt]{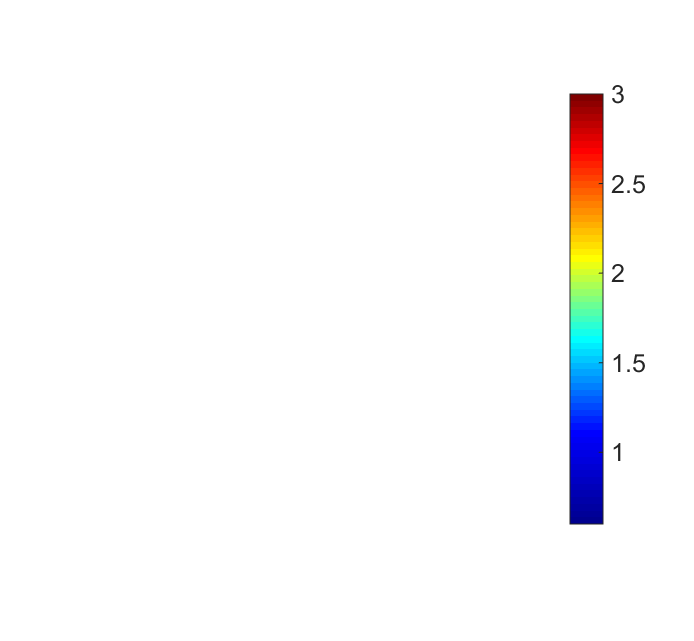}}
\put(22,150){\includegraphics[width=110 pt]{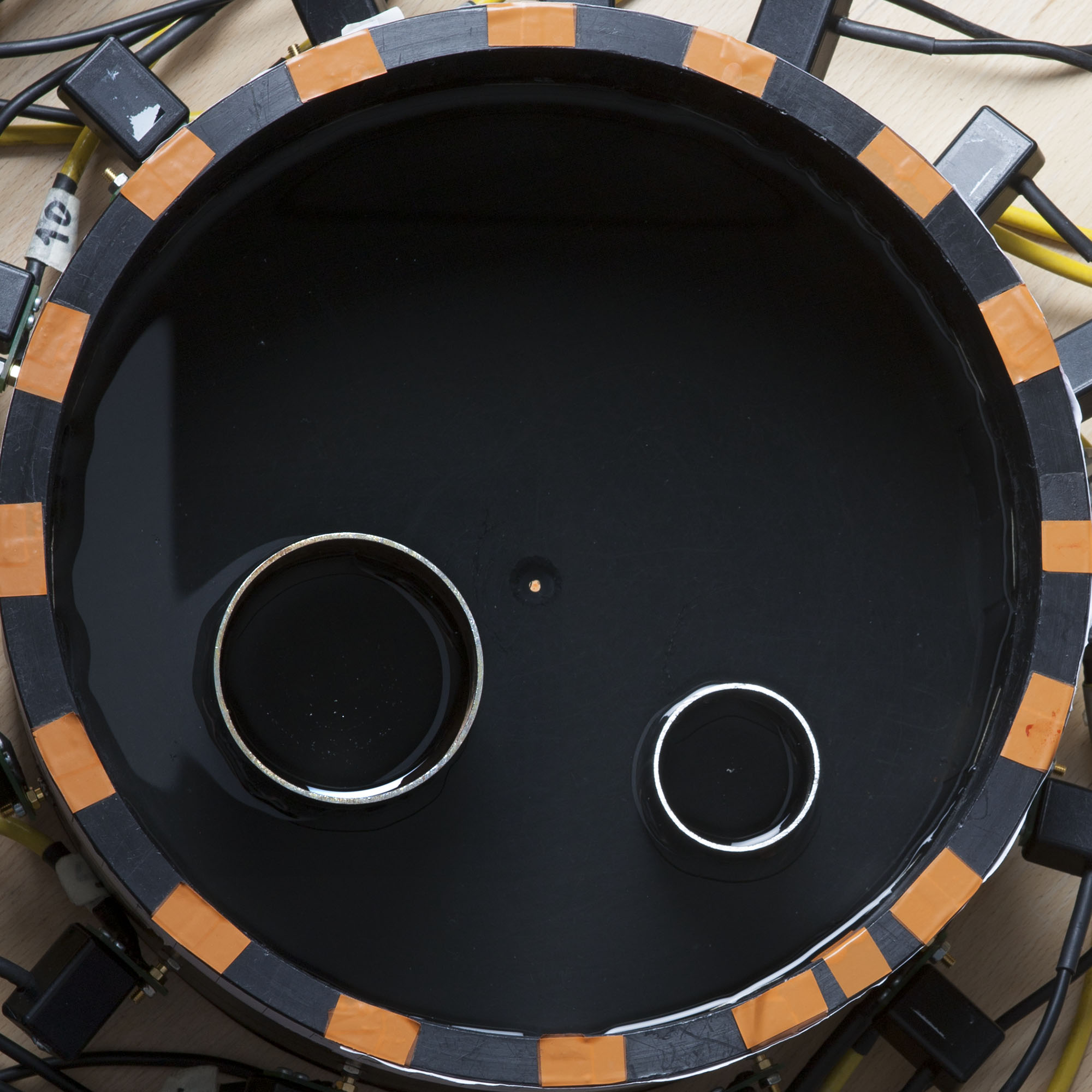}}
\put(140,140){\includegraphics[width=175 pt]{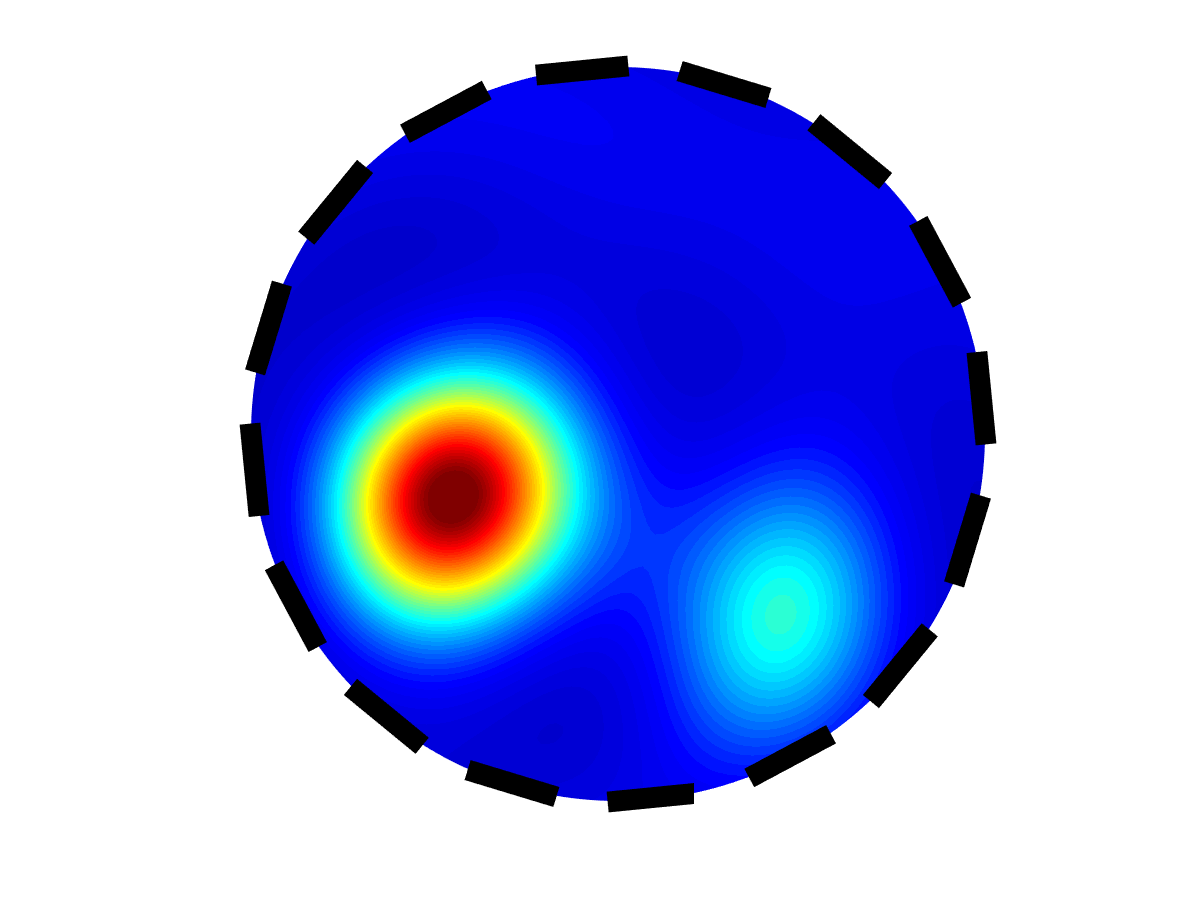}}
\put(-10,0){\includegraphics[width=175 pt]{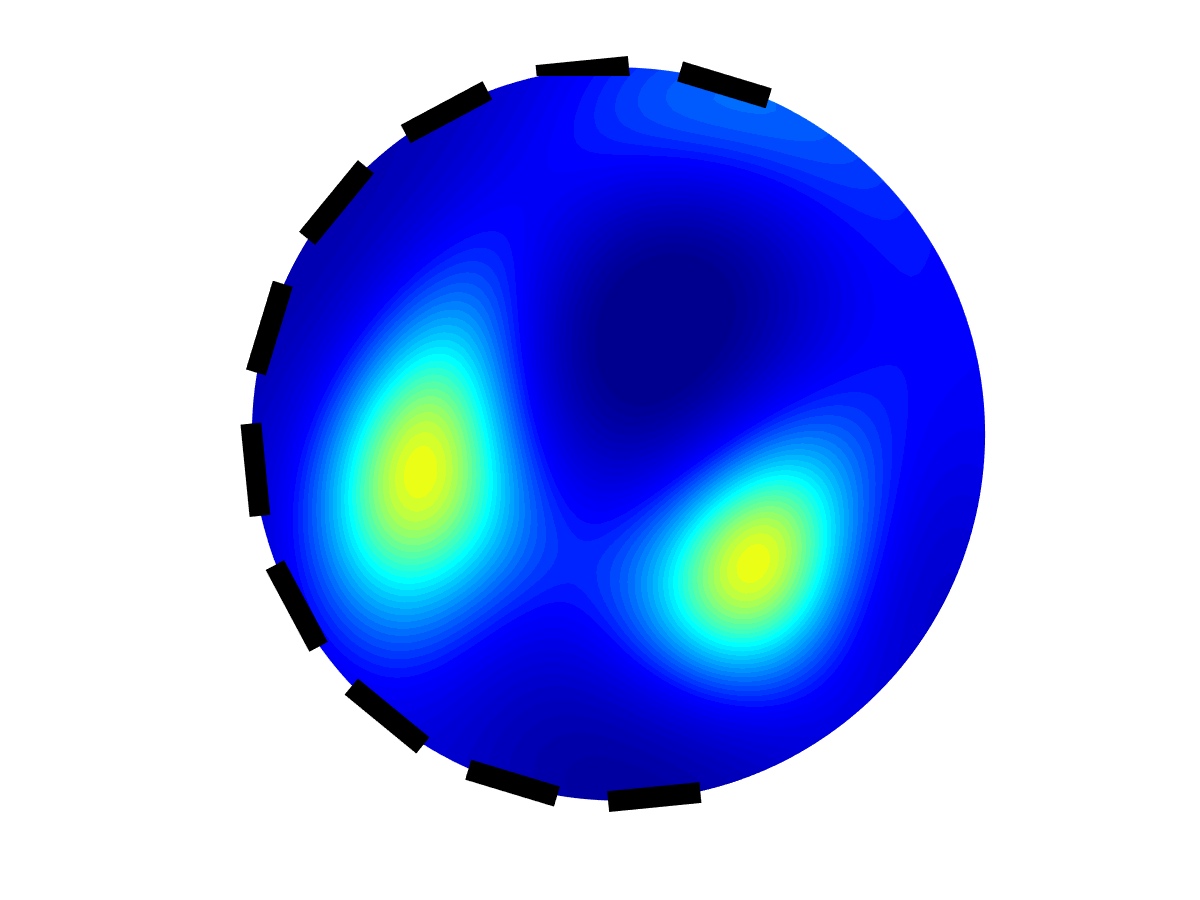}}
\put(140,0){\includegraphics[width=175 pt]{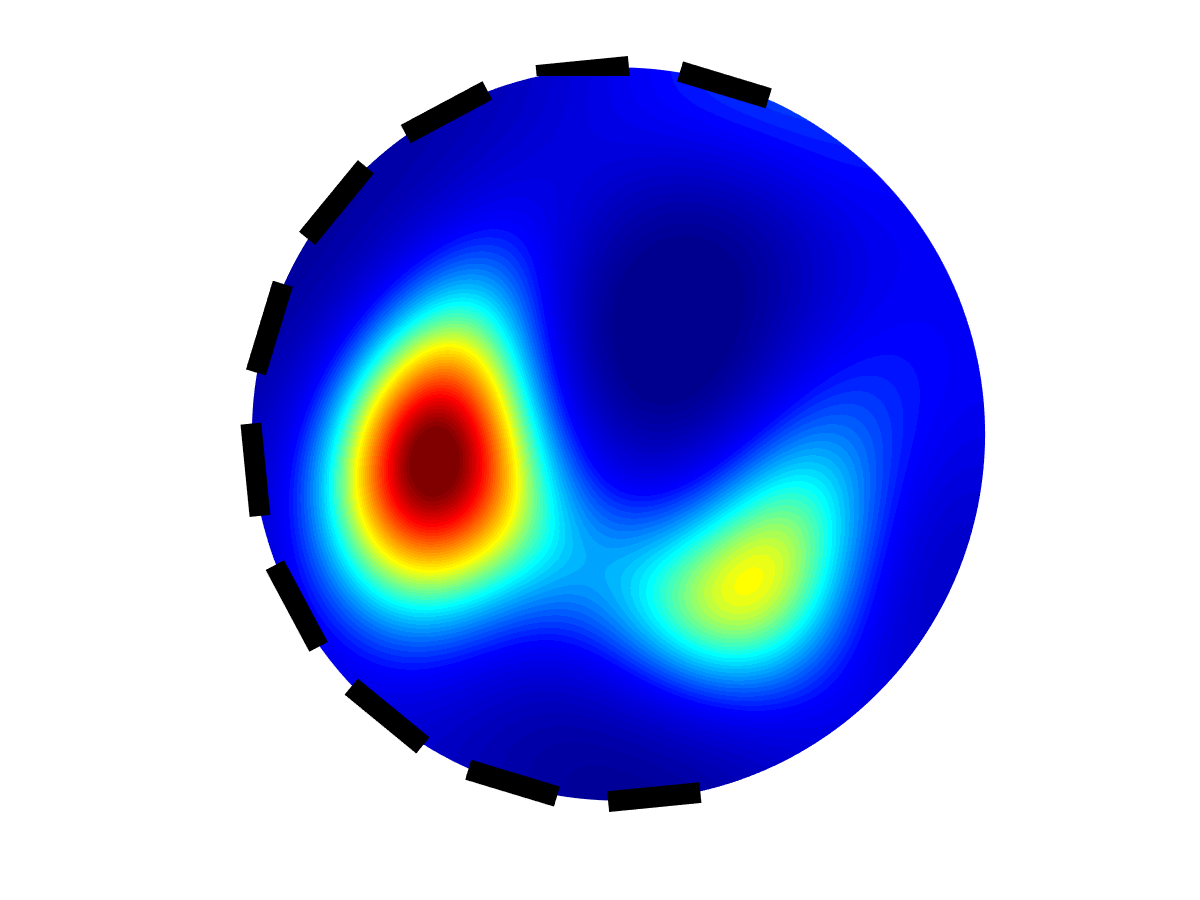}}
\put(38,135){\large{Partial-boundary}}
\put(45,125){\large{reconstruction}}
\put(180,135){\large{Approximated data}}
\put(195,125){\large{reconstruction}}
\put(55,270){\large{Phantom}}
\put(197,275){\large{Full-electrode}}
\put(195,265){\large{reconstruction}}

\end{picture}
\caption{\label{fig:circPairKIT_recons} Reconstructions of a paired circle phantom from real measurement data with the KIT4 system and an adjacent current pattern. The reference reconstruction (top right) has been computed from all 16 electrodes. The partial-boundary reconstruction (bottom left) is computed from 10 electrodes on 62.5\% of the boundary. The improved reconstruction (bottom right) is obtained from the approximated ND matrix}
\end{figure}

\subsection{Discussion of computational results}\label{sec:discus}
The reconstructions illustrate that the introduced methodology is capable of recovering certain information contained in the full ND maps from measurements conducted on electrodes only covering a part of the boundary. Whereas we are not able to perfectly recover the ND map or a reconstruction of the conductivity, we can certainly improve the reconstruction quality and information content of the image.
Especially the second example from CEM measurements illustrates nicely how noise corrupted data can ruin the reconstructions, but with the proposed algorithm we are able to clean the data and recover the two important circular inclusions. It should be noted here that the full-boundary reconstructions for both simulated cases do not include additional measurement noise. The improvement of data can also be seen in Table \ref{table:reconError}, where we present the relative error to full-boundary data and the corresponding reconstructions. The error in ND maps clearly gets better, whereas the reconstruction error just slightly improves even though the visual effects are far more striking. We emphasize here that EIT is a highly nonlinear inverse problem.

In case of real measurement data, the partial data reconstruction is already quite good, but the conductivity values are incorrect. Here the approximation procedure corrects the conductivity values and cleans some boundary artifacts. The effectiveness is also illustrated by quite an improvement in the reconstruction error, as shown in Table \ref{table:reconError}.

\begin{table}[!h]
\centering 
\caption{Evaluation of relative $\ell^2$ errors to full-boundary measurements. In case of CEM and real measurement data the full-boundary data corresponds to data with all electrodes active (and no noise).}
\label{table:reconError}
\begin{tabular}{l|ccccccc}

rel. $\ell^2$ errors & part. ND map&  approx. ND map   & part. recon. & approx. recon\\
\hline
ECM data  & 68.21\%& 58.12\% & 18.49\%  & 17.73\%  \\
\hline
CEM data& 66.72\% & 46.30\% & 25.06\% & 27.92\% \\
\hline
KIT4 data& 53.02\% & 43.66\% & 24.23\% &   19.55\% \\ 
\hline
\end{tabular}
\end{table}

An important feature of the proposed algorithm is the improvement of linear independent basis functions representing the ND maps. The effectiveness of D-bar methods also depends on the amount of basis functions by which we can expand the auxiliary functions in the integral \eqref{eq:scatND}. In case of real data we were able to improve the rank of the partial ND map from 9 to 15 for the approximated ND map and this alone can lead to an improvement in the reconstruction procedure.

The computation times of the proposed algorithm are well below a second, since after the data is collected the whole algorithm consists of solving 2 linear systems. 

\section{Conclusions}\label{sec:conclusion}
Data collected only on a part of the boundary is inherently different to the idealized full-boundary continuum case. This is especially relevant for real measurements, since any electrode model can be considered as a partial-boundary problem. 
Therefore, we have introduced in this study a methodology that relates partial-boundary data to full-boundary data and we proposed an algorithm that is capable of computing an approximate ND map from noisy electrode measurements that cover only parts of the boundary. For this purpose we have introduced a framework to model electrode measurements in a continuum setting more accurately and have shown in Theorem \ref{theo:approx} that we can model measurements from the \emph{complete electrode model} up to an error that depends linearly on the size of electrodes, but is independent of the missing boundary.

We have then formulated an algorithm that computes an approximation to the (full-boundary) ND matrix. The approximation process consists of solving a minimization problem to obtain the coefficients of the ND matrix and the solution can be simply computed by solving a regularized normal equation. The whole process takes less than a second and hence can be considered as a reasonable precomputing step. We have then demonstrated the effectiveness of the methodology for noisy simulated data and real measurement data. Reconstructions were computed by a D-bar algorithm for ND maps, but we would like to point out that the presented improvements from partial-boundary data have a potential impact on all reconstruction algorithms that use ND maps as data input.

More work has to be done for pairwise injection current patterns, for which the representation of the partial-boundary map is not known. Furthermore, one has more freedom of choosing pairwise injection current patterns close to the missing boundary, that includes current patterns that jump over the missing region. Finally we will concentrate further research on improvements of the approximation functional \eqref{eqn:oneStepOpt} and a thorough numerical study of the approximation properties.

\section*{Acknowledgments}
This work was supported by the Academy of Finland through the Finnish
Centre of Excellence in Inverse Problems Research 20122017, decision number
250215. The author was partially supported by FiDiPro project of the Academy of Finland, decision number 263235.

Personally, I would like to thank Ville Kolehmainen, Tuomo Savolainen, and Aku Sepp\"anen from the Inverse Problems research group at University of Finland, for the possibility to use the KIT4 measurement data. I also thank Xiaoqun Zhang for the helpful discussions that led to the start of this project during my visit at Shanghai Jiao Tong University in 2015. Finally, I thank Samuli Siltanen for his continuous support and guidance.

I also thank Nuutti Hyv\"onen and the anonymous referees for their valuable comments for improvements.
\bibliographystyle{siam}
\bibliography{Inverse_problems_references_2017}

\begin{thebibliography}{10}

\bibitem{Alsaker2017}
{\sc M.~Alsaker, S.~Hamilton, and A.~Hauptmann}, {\em A direct d-bar method for
  partial boundary data electrical impedance tomography with a priori
  information}, to appear in Inverse Problems \& Imaging,  (2017).

\bibitem{Alsaker2016a}
{\sc M.~Alsaker and J.~L. Mueller}, {\em A {D}-bar algorithm with a priori
  information for 2-{D} {E}lectrical {I}mpedance {T}omography}, SIAM J. on
  Imaging Sciences, 9 (2016), pp.~1619--1654.

\bibitem{Calvetti2015}
{\sc D.~Calvetti, P.~J. Hadwin, J.~M. Huttunen, D.~Isaacson, J.~P. Kaipio,
  D.~McGivney, E.~Somersalo, and J.~Volzer}, {\em Artificial boundary
  conditions and domain truncation in electrical impedance tomography. part i:
  Theory and preliminary results}, Inverse Problems \& Imaging, 9 (2015),
  pp.~749--766.

\bibitem{Calvetti2015a}
{\sc D.~Calvetti, P.~J. Hadwin, J.~M. Huttunen, J.~P. Kaipio, and
  E.~Somersalo}, {\em Artificial boundary conditions and domain truncation in
  electrical impedance tomography. part ii: Stochastic extension of the
  boundary map.}, Inverse Problems \& Imaging, 9 (2015), pp.~767--789.

\bibitem{Cheney1992}
{\sc M.~Cheney and D.~Isaacson}, {\em Distinguishability in impedance imaging},
  IEEE Transactions on Biomedical Engineering, 39 (1992), pp.~852--860.

\bibitem{Cheng1989}
{\sc K.~Cheng, D.~Isaacson, J.~Newell, and D.~Gisser}, {\em Electrode models
  for electric current computed tomography}, IEEE Transactions on Biomedical
  Engineering, 36 (1989), pp.~918--924.

\bibitem{Cinnella2015}
{\sc G.~Cinnella, S.~Grasso, P.~Raimondo, D.~D’Antini, L.~Mirabella,
  M.~Rauseo, and M.~Dambrosio}, {\em Physiological effects of the open lung
  approach in patients with early, mild, diffuse acute respiratory distress
  syndromean electrical impedance tomography study}, The Journal of the
  American Society of Anesthesiologists, 123 (2015), pp.~1113--1121.

\bibitem{DeAngelo2010}
{\sc M.~DeAngelo and J.~L. Mueller}, {\em 2d {D}-bar reconstructions of human
  chest and tank data using an improved approximation to the scattering
  transform}, Physiological Measurement, 31 (2010), pp.~221--232.

\bibitem{Dodd2014}
{\sc M.~Dodd and J.~L. Mueller}, {\em A real-time {D}-bar algorithm for 2-{D}
  electrical impedance tomography data}, Inverse problems and imaging, 8
  (2014), pp.~1013--1031.

\bibitem{Garde2016}
{\sc H.~Garde and S.~Staboulis}, {\em Convergence and regularization for
  monotonicity-based shape reconstruction in electrical impedance tomography},
  Numerische Mathematik,  (2016), pp.~1--31.

\bibitem{Gisser1990}
{\sc D.~Gisser, D.~Isaacson, and J.~Newell}, {\em Electric current computed
  tomography and eigenvalues}, SIAM Journal on Applied Mathematics, 50 (1990),
  pp.~1623--1634.

\bibitem{Grant2011}
{\sc C.~Grant, T.~Pham, J.~Hough, T.~Riedel, C.~Stocker, and A.~Schibler}, {\em
  Measurement of ventilation and cardiac related impedance changes with
  electrical impedance tomography}, Critical Care, 15 (2011), p.~R37.

\bibitem{Hallaji2014}
{\sc M.~Hallaji, A.~Sepp\"{a}nen, and M.~Pour-Ghaz}, {\em Electrical impedance
  tomography-based sensing skin for quantitative imaging of damage in
  concrete}, Smart Materials and Structures, 23 (2014), p.~085001.

\bibitem{Hanke2011}
{\sc M.~Hanke, N.~Hyv{\"o}nen, and S.~Reusswig}, {\em Convex backscattering
  support in electric impedance tomography}, Numerische Mathematik, 117 (2011),
  pp.~373--396.

\bibitem{Harrach2015a}
{\sc B.~Harrach}, {\em Interpolation of missing electrode data in electrical
  impedance tomography}, Inverse Problems, 31 (2015), p.~115008.

\bibitem{Harrach2016a}
{\sc B.~Harrach and M.~N. Minh}, {\em Enhancing residual-based techniques with
  shape reconstruction features in electrical impedance tomography}, Inverse
  Problems, 32 (2016), p.~125002.

\bibitem{Harrach2013}
{\sc B.~Harrach and M.~Ullrich}, {\em Monotonicity-based shape reconstruction
  in electrical impedance tomography}, SIAM Journal on Mathematical Analysis,
  45 (2013), pp.~3382--3403.

\bibitem{Harrach2015}
\leavevmode\vrule height 2pt depth -1.6pt width 23pt, {\em Resolution
  guarantees in electrical impedance tomography}, IEEE transactions on medical
  imaging, 34 (2015), pp.~1513--1521.

\bibitem{Hauptmann2017a}
{\sc A.~Hauptmann, V.~Kolehmainen, N.~M. Mach, T.~Savolainen, A.~Sepp{\"a}nen,
  and S.~Siltanen}, {\em Open 2d electrical impedance tomography data archive},
  arXiv:1704.01178,  (2017).

\bibitem{Hauptmann2017}
{\sc A.~Hauptmann, M.~Santacesaria, and S.~Siltanen}, {\em Direct inversion
  from partial-boundary data in electrical impedance tomography}, Inverse
  Problems, 33 (2017), p.~025009.

\bibitem{Hou2009}
{\sc T.~Hou and J.~Lynch}, {\em Electrical impedance tomographic methods for
  sensing strain fields and crack damage in cementitious structures}, Journal
  of Intelligent Material Systems and Structures, 20 (2009), pp.~1363--1379.

\bibitem{Hyvoenen2004}
{\sc N.~Hyv{\"o}nen}, {\em Complete electrode model of electrical impedance
  tomography: Approximation properties and characterization of inclusions},
  SIAM Journal on Applied Mathematics, 64 (2004), pp.~902--931.

\bibitem{Hyvoenen2009}
{\sc N.~Hyv{\"o}nen}, {\em Approximating idealized boundary data of electric
  impedance tomography by electrode measurements}, Mathematical Models and
  Methods in Applied Sciences, 19 (2009), pp.~1185--1202.

\bibitem{Ikehata2000c}
{\sc M.~Ikehata}, {\em Reconstruction of the support function for inclusion
  from boundary measurements}, Journal of Inverse and Ill-Posed Problems, 8
  (2000), pp.~367--378.

\bibitem{Ikehata2000a}
{\sc M.~Ikehata and S.~Siltanen}, {\em Numerical method for finding the convex
  hull of an inclusion in conductivity from boundary measurements}, Inverse
  Problems, 16 (2000), pp.~1043--1052.

\bibitem{Ikehata2004}
{\sc M.~Ikehata and S.~Siltanen}, {\em Electrical impedance tomography and
  {M}ittag-{L}effler's function}, Inverse Problems, 20 (2004), pp.~1325--1348.

\bibitem{Isaacson2004}
{\sc D.~Isaacson, J.~L. Mueller, J.~C. Newell, and S.~Siltanen}, {\em
  Reconstructions of chest phantoms by the {D}-bar method for electrical
  impedance tomography}, IEEE Transactions on Medical Imaging, 23 (2004),
  pp.~821--828.

\bibitem{Karagiannidis2015}
{\sc C.~Karagiannidis, A.~D. Waldmann, C.~Ferrando~Ortol{\'a},
  M.~Mu{\~n}oz~Martinez, A.~Vidal, A.~Santos, P.~L. R{\'o}ka,
  M.~Perez~M{\'a}rquez, S.~H. Bohm, and F.~Suarez-Spimann}, {\em
  Position-dependent distribution of ventilation measured with electrical
  impedance tomography}, European Respiratory Journal, 46 (2015).

\bibitem{Karhunen2010}
{\sc K.~Karhunen, A.~Sepp{\"a}nen, A.~Lehikoinen, P.~J.~M. Monteiro, and J.~P.
  Kaipio}, {\em Electrical resistance tomography imaging of concrete}, Cement
  and Concrete Research, 40 (2010), pp.~137--145.

\bibitem{Kaup1995}
{\sc P.~Kaup and F.~Santosa}, {\em Nondestructive evaluation of corrosion
  damage using electrostatic measurements}, Journal of Nondestructive
  Evaluation, 14 (1995), pp.~127--136.

\bibitem{Kirsch2005}
{\sc A.~Kirsch}, {\em The factorization method for a class of inverse elliptic
  problems}, Mathematische Nachrichten, 278 (2005), pp.~258--277.

\bibitem{Knudsen2006}
{\sc K.~Knudsen}, {\em The {C}alder\'on problem with partial data for less
  smooth conductivities}, Communications in Partial Differential Equations, 31
  (2006), pp.~57--71.

\bibitem{Knudsen2009}
{\sc K.~Knudsen, M.~Lassas, J.~Mueller, and S.~Siltanen}, {\em Regularized
  {D}-bar method for the inverse conductivity problem}, Inverse Problems and
  Imaging, 3 (2009), pp.~599--624.

\bibitem{Kourunen2008}
{\sc J.~Kourunen, T.~Savolainen, A.~Lehikoinen, M.~Vauhkonen, and
  L.~Heikkinen}, {\em Suitability of a pxi platform for an electrical impedance
  tomography system}, Measurement Science and Technology, 20 (2008), p.~015503.

\bibitem{Lechleiter2006}
{\sc A.~Lechleiter}, {\em A regularization technique for the factorization
  method}, Inverse problems, 22 (2006), p.~1605.

\bibitem{Lechleiter2008a}
{\sc A.~Lechleiter, N.~Hyv\"onen, and H.~Hakula}, {\em The factorization method
  applied to the complete electrode model of impedance tomography}, SIAM
  Journal on Applied Mathematics, 68 (2008), pp.~1097--1121.

\bibitem{Lieb2003}
{\sc E.~H. Lieb, R.~Seiringer, and J.~Yngvason}, {\em Poincar{\'e} inequalities
  in punctured domains}, Annals of mathematics,  (2003), pp.~1067--1080.

\bibitem{Liu2015}
{\sc D.~Liu, V.~Kolehmainen, S.~Siltanen, A.-m. Laukkanen, and
  A.~Sepp{\"a}nen}, {\em Estimation of conductivity changes in a region of
  interest with electrical impedance tomography}, Inverse Problems and Imaging,
  9 (2015), pp.~211--229.

\bibitem{Liu2015a}
{\sc D.~Liu, V.~Kolehmainen, S.~Siltanen, and A.~Sepp{\"a}nen}, {\em A
  nonlinear approach to difference imaging in {EIT}; assessment of the
  robustness in the presence of modelling errors}, Inverse Problems, 31 (2015),
  p.~035012.

\bibitem{Mueller1999}
{\sc J.~Mueller, D.~Isaacson, and J.~C. Newell}, {\em A reconstruction
  algorithm for electrical impedance tomography data collected on rectangular
  electrode arrays}, IEEE Transactions on Biomedical Engineering, 49 (1999),
  pp.~1379--1386.

\bibitem{Mueller2003}
{\sc J.~Mueller and S.~Siltanen}, {\em Direct reconstructions of conductivities
  from boundary measurements}, SIAM Journal on Scientific Computing, 24 (2003),
  pp.~1232--1266.

\bibitem{Mueller2012}
{\sc J.~Mueller and S.~Siltanen}, {\em Linear and Nonlinear Inverse Problems
  with Practical Applications}, vol.~10 of Computational Science and
  Engineering, SIAM, 2012.

\bibitem{Murphy2009}
{\sc E.~K. Murphy and J.~L. Mueller}, {\em Effect of domain-shape modeling and
  measurement errors on the 2-d {D}-bar method for electrical impedance
  tomography}, IEEE Transactions on Medical Imaging, 28 (2009), pp.~1576--1584.

\bibitem{Nachman1996}
{\sc A.~I. Nachman}, {\em Global uniqueness for a two-dimensional inverse
  boundary value problem}, Annals of Mathematics, 143 (1996), pp.~71--96.

\bibitem{Novikov1988}
{\sc R.~Novikov}, {\em A multidimensional inverse spectral problem for the
  equation $-\delta \psi+(v(x)-eu(x))\psi = 0$}, Functional Analysis and Its
  Applications, 22 (1988), pp.~263--272.

\bibitem{Novikov2004}
{\sc R.~G. Novikov}, {\em Formulae and equations for finding scattering data
  from the dirichlet-to-neumann map with nonzero background potential}, Inverse
  Problems, 21 (2004), pp.~257--270.

\bibitem{Reinius2015}
{\sc H.~Reinius, J.~B. Borges, F.~Fred\'{e}n, L.~Jideus, E.~D. L.~B. Camargo,
  M.~B.~P. Amato, G.~Hedenstierna, L.~A., and F.~Lennmyr}, {\em Real-time
  ventilation and perfusion distributions by electrical impedance tomography
  during one-lung ventilation with capnothorax}, Acta Anaesthesiologica
  Scandinavica, 59 (2015), pp.~354--368.

\bibitem{Siltanen2000}
{\sc S.~Siltanen, J.~Mueller, and D.~Isaacson}, {\em An implementation of the
  reconstruction algorithm of {A}. {N}achman for the 2-{D} inverse conductivity
  problem}, Inverse Problems, 16 (2000), pp.~681--699.

\bibitem{Somersalo1992}
{\sc E.~Somersalo, M.~Cheney, and D.~Isaacson}, {\em Existence and uniqueness
  for electrode models for electric current computed tomography}, SIAM Journal
  on Applied Mathematics, 52 (1992), pp.~1023--1040.

\bibitem{Tamburrino2002}
{\sc A.~Tamburrino and G.~Rubinacci}, {\em A new non-iterative inversion method
  for electrical resistance tomography}, Inverse Problems, 18 (2002),
  pp.~1809--1829.

\bibitem{Vauhkonen1999a}
{\sc P.~J. Vauhkonen, M.~Vauhkonen, T.~Savolainen, and J.~P. Kaipio}, {\em
  Three-dimensional electrical impedance tomography based on the complete
  electrode model}, IEEE Transactions on Biomedical Engineering, 46 (1999),
  pp.~1150--1160.

\end{thebibliography}

\end{document}